\newcommand{\om}{\omega}
\newcommand{\ba}{\mathcal{G}}
\newcommand{\fg}{\mathfrak g}
\newcommand{\fp}{\mathfrak p}
\newcommand{\fq}{\mathfrak q}
\newcommand{\fl}{\mathfrak l}
\newcommand{\fb}{\mathfrak b}
\newcommand{\fn}{\mathfrak n}
\newcommand{\Ad}{{\rm Ad}}
\newcommand{\id}{{\rm id}}
\newcommand{\R}{\mathbb{R}}
\newtheorem{prop*}{Proposition}
\newtheorem{thm*}{Theorem}
\newtheorem{lemma*}{Lemma}
\newtheorem{cor*}{Corollary}
\newtheorem{rem*}{Remark}
\theoremstyle{definition}
\newtheorem{def*}{Definition}
\theoremstyle{remark}
\begin{document}
\title{A construction of non--flat non--homogeneous symmetric parabolic geometries}
\author{Jan Gregorovi\v c and Lenka Zalabov\' a}
\address{J.G. E.\v Cech  Institute,  Mathematical  Institute  of Charles  University,  Sokolovsk\'a 83, Praha 8 - Karl\'in, Czech Republic; L.Z. Institute of Mathematics and Biomathematics, Faculty of Science, University of South Bohemia, Brani\v sovsk\' a 1760, \v Cesk\' e Bud\v ejovice, 370 05, Czech Republic}
\email{jan.gregorovic@seznam.cz, lzalabova@gmail.com}
\thanks{First author supported by the grant P201/12/G028 of the Czech Science Foundation.}
\keywords{}
\subjclass[2010]{53B15; 53C10, 53C30, 58J70}
\dedicatory{
The paper is dedicated to Professor Vladim\' ir Sou\v cek on the occasion of his 70th birthday.}
\maketitle

\begin{abstract} 
We construct a series of examples of non--flat non--homogeneous parabolic geometries that carry a symmetry of the parabolic geometry at each point.
\end{abstract}

\section{Introduction} \label{prvni-cast}
In this article, we deal with symmetric regular and normal parabolic geometries on smooth connected manifolds. Consider a regular and normal parabolic geometry $(\ba \to M, \om)$ of type $(G,P)$. A \emph{symmetry} at a point $x\in M$ is an automorphisms $\phi_x$ of the parabolic geometry such that $\phi_x(x)=x$ and the restriction of $T_x\phi_x$ to the bracket generating distribution $T^{-1}M$ is $-\id$. The parabolic geometry is \emph{symmetric}, if there is a symmetry at each $x \in M$. 

There are several known constructions of examples of symmetric parabolic geometries. In particular, there is a simple condition proved in \cite{GZ-Lie} that is necessary and sufficient for the existence of symmetric parabolic geometries.

\begin{lemma*}
Let $G$ be semisimple Lie group and $P$ parabolic subgroup of $G$. Let $G_0\ltimes \exp(\fp_+)$ be the reductive Levi decomposition of $P$ corresponding to grading $\fg_i$ of $\fg$, where $\fg_0$ is the Lie algebra of $G_0$ and $\fp_+=\fg_1\oplus \dots \oplus\fg_k$.

If the parabolic geometry $(\ba \to M, \om)$ of type $(G,P)$ is symmetric, then there is $s\in G_0$ acting as $-\id$ on $\fg_{-1}$. Moreover, if the type $(G,P)$ is effective, then the element $s$ is unique element of  $G_0$ acting as $-\id$ on $\fg_{-1}$.

Conversely, if there is $s\in G_0$ acting as $-\id$ on $\fg_{-1}$, then the flat model $(G \to G/P, \om_G)$ is symmetric. In particular, there is infinite number of symmetries at the origin $eP$ given by left multiplications by elements of the form 
$$s\exp(-\Ad_s(Y))\exp (Y)$$ 
for $Y \in \fp_+$ and symmetries at arbitrary point $gP$ are then given by conjugation $gs\exp(-\Ad_s(Y))\exp (Y) g^{-1}$. In fact, we get a symmetric flat homogeneous parabolic geometry.
\end{lemma*}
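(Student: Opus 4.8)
The plan rests on the standard dictionary between automorphisms of a parabolic geometry and their action on tangent spaces. I would first record the facts I need: (i) any automorphism $\phi$ of $M$ lifts to a principal bundle automorphism $\Phi$ of $\ba$ with $\Phi^*\om=\om$; (ii) such a $\Phi$ is uniquely determined by its value at a single point of $\ba$, since $\om$ trivialises $T\ba$ and $\Phi$ preserves the $\om$--constant vector fields; and (iii) if $u\in\ba$ lies over $x$ and $\Phi(u)=u\cdot p$ with $p\in P$, then, identifying $T_xM$ with $\fg/\fp$ via the frame $u$, the map $T_x\phi$ acts on $\fg/\fp$ as $\Ad_p$. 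Fact (iii) is essentially the only computation: it follows by pushing $T_u\Phi$ back to $u$ along the right $P$--action, combining $r_p^*\om=\Ad_{p^{-1}}\circ\om$ with $\Phi^*\om=\om$ (a sanity check on the flat model: $\Phi=\lambda_h$, $u=e$, $p=h$ gives $T_{eP}\bar\lambda_h=\Ad_h$). Under the same identification $T^{-1}M$ corresponds to $(\fg_{-1}\oplus\fp)/\fp\cong\fg_{-1}$, and for $p=g_0\exp(Z)$ with $g_0\in G_0$, $Z\in\fp_+$ the induced action on this submodule is $\Ad_{g_0}|_{\fg_{-1}}$, because $[\fp_+,\fg_{-1}]\subseteq\fp$ forces $\Ad_{\exp Z}X\equiv X\pmod{\fp}$ for $X\in\fg_{-1}$.

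For necessity, I would lift a symmetry $\phi_x$ at $x$ to $\Phi_x$, choose $u$ over $x$ and write $\Phi_x(u)=u\cdot g_0\exp(Z)$; by the previous paragraph the condition $T_x\phi_x|_{T^{-1}M}=-\id$ is precisely $\Ad_{g_0}|_{\fg_{-1}}=-\id$, so $s:=g_0$ works. For uniqueness when the type is effective: if $s,s'\in G_0$ both act by $-\id$ on $\fg_{-1}$ then $k:=s^{-1}s'\in G_0$ acts trivially on $\fg_{-1}$, hence on all of $\fg_-$ (it preserves the grading and $\fg_{-1}$ generates $\fg_-$); then $\exp(-X)\,k\,\exp(X)=k\exp(-\Ad_{k^{-1}}X)\exp(X)=k$ for every $X\in\fg_-$, so $g^{-1}kg\in P$ on the dense big Bruhat cell $\exp(\fg_-)\cdot P$ and, the condition being closed in $g$, for all $g\in G$; thus $k$ lies in the normal core of $P$, trivial by effectivity, and $s=s'$.

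For sufficiency, I would work on the flat model $G\to G/P$ with the left Maurer--Cartan form $\om_G$, so that every left translation $\lambda_g$ is an automorphism. Given $s\in G_0$ with $\Ad_s|_{\fg_{-1}}=-\id$ and $Y\in\fp_+$, put $h=s\exp(-\Ad_s(Y))\exp(Y)$. Since $\Ad_s$ preserves $\fp_+$, the factor $\exp(-\Ad_s(Y))\exp(Y)$ lies in the subgroup $\exp(\fp_+)$, so $h\in G_0\exp(\fp_+)=P$ and $\bar\lambda_h$ fixes $eP$. By fact (iii) with $u=e$ and $p=h$, $T_{eP}\bar\lambda_h$ acts on $\fg/\fp$ as $\Ad_h=\Ad_s\circ\Ad_{\exp(-\Ad_s(Y))}\circ\Ad_{\exp(Y)}$, and on $\fg_{-1}$ modulo $\fp$ the two $\exp(\fp_+)$--factors act trivially while $\Ad_s$ acts as $-\id$; hence $\lambda_h$ is a symmetry at $eP$. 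Letting $Y$ vary over $\fg_1$ (where $\Ad_s=-\id$, so $h=s\exp(2Y)$) produces infinitely many of them, since $\fg_1\neq0$. Finally, for an arbitrary point $gP$ the conjugate $\lambda_{ghg^{-1}}=\lambda_g\circ\lambda_h\circ\lambda_{g^{-1}}$ is an automorphism fixing $gP$ (as $h\in P$), and being conjugate to the symmetry $\bar\lambda_h$ by the automorphism $\bar\lambda_g$, its differential at $gP$ restricts to $-\id$ on $T^{-1}_{gP}(G/P)$; so it is a symmetry at $gP$. Since the flat model is visibly $G$--homogeneous and now carries a symmetry at every point, it is a symmetric flat homogeneous parabolic geometry.

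The step I expect to cause the most trouble is fact (iii) together with the bookkeeping around it — keeping all conventions mutually consistent (left versus right Maurer--Cartan form, $\Ad_p$ versus $\Ad_{p^{-1}}$, the identification $TM\cong\ba\x_P(\fg/\fp)$ and the induced $T^{-1}M\cong\fg_{-1}$) so that the lone sign $-\id$ lands exactly where it must. Once (iii) is pinned down, both implications are short; the only other delicate point is extracting from ``effective'' precisely the injectivity of the $G_0$--action on $\fg_{-1}$ used for uniqueness of $s$.
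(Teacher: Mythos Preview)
Your argument is correct. Note, however, that the paper does not itself prove this lemma: it is quoted from \cite{GZ-Lie} as background, so there is no in--paper proof against which to compare. Your proposal supplies a self--contained argument along the standard lines one would expect from that reference: the dictionary between $T_x\phi|_{T^{-1}M}$ and the $G_0$--component of the isotropy element via the Cartan connection (your fact (iii)), the normal--core argument for uniqueness under effectivity, and the direct verification on the flat model for sufficiency. The one step you leave implicit is the parenthetical claim that $\Ad_s=-\id$ on $\fg_1$; this follows from $\Ad$--invariance of the Killing form together with the nondegenerate pairing of $\fg_{-1}$ with $\fg_1$, and is worth one explicit sentence.
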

It is proved in \cite[Proposition 1.29]{HG-DGA} that for each semisimple Lie algebra $\fg$ and its parabolic subalgebra $\fp$, there always exist a Lie group $G$ and its closed subgroup $P$ such that the flat model $(G \to G/P, \om_G)$ is symmetric. In fact, there is a general construction of flat and non--flat homogeneous symmetric parabolic geometries on homogeneous fiber bundles over symmetric spaces described in the article \cite[Theorem 2.7.]{HG-DGA}.

There are also examples of flat non--homogeneous symmetric parabolic geometries obtained from the flat model, which are not related to symmetric spaces. It is shown in \cite{ja-springer, ja-CEJM, GZ-Srni15} that if we remove two distinguished points $u,v$ from the flat model $(G \to G/P, \om_G)$ of parabolic geometries of projective, projective contact and conformal type, then the restrictions of the flat models $(G \to G/P, \om_G)$ to $M:=G/P -\{u,v\}$ are still symmetric parabolic geometries. In all these cases, the manifold $M$ decomposes into several orbits with respect to the action of the automorphisms group (which consists exactly of elements of $G$ that preserve the subset $\{ u,v\} \subset G/P$), and on each of this orbits, the symmetries either preserve $u$ and $v$ or swap them.

Further, there are constructions of homogeneous symmetric parabolic geometries other than the construction in \cite{HG-DGA}. In particular, in \cite{GZ-Lie} is presented a construction of non--flat homogeneous symmetric parabolic geometries on a (semidirect) product of a flat model of a different (non--effective) type of parabolic geometry and a homogeneous space of a nilpotent Lie group. 

There is a natural question, whether there are also non--flat non--homogeneous symmetric parabolic geometries? It is proved in \cite{GZ-Srni15} that all non--homogeneous symmetric conformal geometries are necessarily flat and it is clear from the proof that the same result can be obtained for all AHS--structures. However, we will show in this article that we can combine the constructions from  \cite{GZ-Lie} and \cite{ja-springer, ja-CEJM, GZ-Srni15} and prove that there are types of parabolic geometries for which the question can be answered positively. 

In the first section, we show how to combine the above constructions to get new examples of non--flat non--homogeneous symmetric parabolic geometries. We discuss several necessary and sufficient conditions under which the construction is applicable. As our main result, we show in the Theorem \ref{main} that there are two series of non--flat non--homogeneous symmetric parabolic geometries provided by our construction. We describe these parabolic geometries in detail.

In the second section, we give a proof of the main Theorem \ref{main}. The proof consists of several technical lemmas and we explain the technicalities in detail. 

\section{Non--flat non--homogeneous symmetric parabolic geometries}
Let us firstly give the statement that explains how to combine the two constructions of symmetric parabolic geometries mentioned in the Introduction.
\begin{prop*} \label{prop}
Let $G$ be semisimple Lie group and $P$ parabolic subgroup of $G$. Let $G_0\ltimes \exp(\fp_+)$ be the reductive Levi decomposition of $P$ corresponding to grading $\fg_i$ of $\fg$, where $\fg_0$ is the Lie algebra of $G_0$ and $\fp_+=\fg_1\oplus \dots \oplus\fg_k$. Suppose there is a non--flat $K$--homogeneous parabolic geometry $(\ba\to M,\om)$ of type $(G,P)$ satisfying the following conditions:
\begin{enumerate}
\item $K$ is an algebraic Lie subgroup of the automorphism group of the parabolic geometry $(\ba\to M,\om)$ acting transitively on $M$ and we denote by $H$ the stabilizer of a point $x\in M$.
\item There is $u\in \ba$ covering $x$ and reductive Levi decomposition $K=\exp(\fn)\rtimes \bar G$ such that if we define the subgroups
$$\exp(\fn_0):=\{\exp(X)\in \exp(\fn): \exp(X)(u)\in uG_0\},$$
$$\bar G_0:=\{\bar g\in \bar G: \bar g(u)\in uG_0\},$$ 
and 
$$\exp(\bar \fp_+):=\{\bar g\in \bar G: \bar g(u)\in u\exp(\fp_+)\},$$
then $H$ is semidirect product of $\exp(\fn_0)$ and the parabolic subgroup $\bar P$ of $\bar G$ with reductive Levi decomposition $\bar P:=\bar G_0\ltimes \exp(\bar \fp_+)$.
\item There is $\bar s\in \bar G_0$ such that $\bar s(u)=us$ for $s\in G_0$ acting as $-\id$ on $\fg_{-1}.$
\item There is submanifold $\bar M$ of $\bar G/\bar P$ such the flat model $(\bar G\to \bar G/\bar P,\omega_{\bar G})$ restricts to non--homogeneous symmetric parabolic geometry of type $(\bar G,\bar P)$ on $\bar M$.
\end{enumerate}
Then the parabolic geometry 
$$(\ba|_{\exp(\fn)/\exp(\fn_0) \times \bar M}\to \exp(\fn)/\exp(\fn_0)\times \bar M,\om|_{\exp(\fn)/\exp(\fn_0)\times \bar M})$$ 
is non--flat non--homogeneous symmetric parabolic geometry of type $(G,P)$.
\end{prop*}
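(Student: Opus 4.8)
The plan is to verify, one by one, the four properties contained in the conclusion: that the restricted object is a regular and normal parabolic geometry of type $(G,P)$, that it is non--flat, that it carries a symmetry at each of its points, and that it is not homogeneous. Since the projection $K\to\bar G=K/\exp(\fn)$ maps $H$ onto $\bar P$, it induces a fibration $\pi\colon M=K/H\to\bar G/\bar P$ with typical fibre $\exp(\fn)/\exp(\fn_0)$, along whose fibres $\exp(\fn)$ acts transitively. As the flat model restricts to a geometry of type $(\bar G,\bar P)$ on $\bar M$, the submanifold $\bar M$ is open in $\bar G/\bar P$; hence $\pi^{-1}(\bar M)$ is open in $M$, and -- using that $\bar P$ normalizes $\exp(\fn_0)$, which is what makes the semidirect product in (2) a subgroup -- one identifies $\pi^{-1}(\bar M)$ with $\exp(\fn)/\exp(\fn_0)\times\bar M$. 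The restriction of the Cartan bundle $\ba$ and of the Cartan connection $\om$ to an open subset is again a Cartan geometry of the same type, and regularity and normality are pointwise conditions on the curvature and so are inherited; this settles the first property. For non--flatness, the curvature function $\ka\colon\ba\to\Lambda^2\mathfrak g_-^*\otimes\fg$ is invariant under all automorphisms of $(\ba\to M,\om)$, in particular under $K$, and is $P$--equivariant; since $K\cdot u\cdot P=\ba$ by transitivity of $K$ on $M$, the function $\ka$ is either identically zero or nowhere zero. As $(\ba\to M,\om)$ is non--flat, $\ka$ is nowhere zero, hence remains so on the nonempty open set $\pi^{-1}(\bar M)$.

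The core of the argument is the construction of a symmetry at an arbitrary point $y\in\pi^{-1}(\bar M)$; put $\bar m=\pi(y)$. By (4) the flat model restricted to $\bar M$ is symmetric, so it carries a symmetry at $\bar m$, and, local automorphisms of flat parabolic geometries being restrictions of left translations, this symmetry is the restriction to $\bar M$ of left translation by an element $\bar g\in\bar G$ with $\bar g\bar M=\bar M$, $\bar g(\bar m)=\bar m$, and tangent action $-\id$ on $T^{-1}_{\bar m}\bar M$. Regarded as an element of $\bar G\subset K$, the element $\bar g$ is an automorphism of $(\ba\to M,\om)$ preserving $\pi^{-1}(\bar M)$, hence an automorphism of the restricted geometry, and it maps $y$ into the $\pi$--fibre through $y$. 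It remains to correct the action of $\bar g$ along this fibre: the distribution $T^{-1}_y M$ surjects onto $T^{-1}_{\bar m}\bar M$, on which $\bar g$ already acts as $-\id$, with kernel the vertical subspace $V_y$ coming from the degree $-1$ part of $\fn$, so one has to post--compose $\bar g$ with an automorphism of the restricted geometry fixing $y$, acting trivially on $T^{-1}_{\bar m}\bar M$, and acting as $-\id$ on $V_y$. Such an automorphism is produced from the element $\bar s$ of (3): since $\bar s(u)=us$ and $s$ acts as $-\id$ on all of $\fg_{-1}$, which contains $V_y$ as the fibre directions, a suitable conjugate of $\bar s\bar g^{-1}$ by an element of $\exp(\fn)$ -- available because $\exp(\fn)$ acts transitively on the fibre $\pi^{-1}(\bar m)$ and because $K$ acts by automorphisms of $(\ba\to M,\om)$ -- fixes $y$, acts trivially on the $\bar M$--directions, and equals $-\id$ on $V_y$. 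The composite of $\bar g$ with this correction is then an automorphism of the restricted geometry fixing $y$ whose tangent map restricts to $-\id$ on all of $T^{-1}_y M$, that is, a symmetry at $y$. I expect this to be the main obstacle: one must arrange simultaneously that the correcting element fixes $y$, is an automorphism of the restricted geometry, acts as $-\id$ on the fibre directions and trivially on the $\bar M$--directions, and that the resulting composite is genuinely $-\id$ -- and not merely block--triangular -- on $T^{-1}_y M$; this is exactly where conditions (2) and (3) and the compatibility of the gradings of $\fg$ and $\bar{\fg}$ under the inclusion $\bar{\fg}\hookrightarrow\fg$ are used.

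Finally, suppose towards a contradiction that some group $A$ acts transitively on $\pi^{-1}(\bar M)$ by automorphisms of the restricted geometry. One checks that the fibration $\pi|_{\pi^{-1}(\bar M)}$ is intrinsic to the parabolic geometry -- its fibres are the maximal integral submanifolds of a canonical subdistribution determined by $\om$ and its curvature and reflecting the radical $\fn$ of $K$ -- so that every automorphism of the restricted geometry preserves it and descends to an automorphism of $(\bar G\to\bar G/\bar P,\om_{\bar G})|_{\bar M}$; since $\pi^{-1}(\bar M)\to\bar M$ is surjective and $A$ is transitive, the induced action on $\bar M$ is transitive, contradicting the non--homogeneity asserted in (4). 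Hence the restricted geometry is not homogeneous, and together with the previous two paragraphs this proves that it is a non--flat non--homogeneous symmetric parabolic geometry of type $(G,P)$.
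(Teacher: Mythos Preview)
Your overall plan is reasonable and more explicit than the paper's own proof, which only argues the symmetric property and relies on the reference \cite{GZ-Lie}.  The essential difficulty, as you correctly identify, is the construction of the symmetry at an arbitrary $y\in\pi^{-1}(\bar M)$, and here your argument has a genuine gap.

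The element $\bar g$ you start from is a symmetry of the flat model $(\bar G,\bar P)$ at $\bar m$, so in a frame over $\bar m$ it equals $\bar s\exp(Y)$ for some $Y\in\bar\fp_+$.  By conditions (2) and (3), this acts on the transported frame $u$ by $s$ times an element of $\exp(\fp_+)$, and since $s$ is $-\id$ on the whole of $\fg_{-1}$, the element $\bar g$ is \emph{already} a symmetry of $(\ba\to M,\om)$ at the point $y_0$ of the fibre over $\bar m$ that it fixes.  No separate treatment of vertical versus horizontal directions is needed.  What remains is only to move the fixed point: write $y=\exp(X)\,y_0$ with $X\in\fn$ and take $\exp(X)\,\bar g\,\exp(-X)$.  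This conjugate fixes $y$, is a symmetry there, and still projects to $\bar g$ on $\bar G/\bar P$ because $\exp(\fn)$ is normal, hence preserves $\pi^{-1}(\bar M)$.  This is the mechanism behind the paper's one--line appeal to \cite{GZ-Lie}, which asserts that the set of symmetries of $M$ at $x$ already contains a copy of $s\exp(\bar\fp_+)$.

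Your proposed correction --- post--composing $\bar g$ with a conjugate of $\bar s\bar g^{-1}$ --- does not work as written.  First, if the correcting automorphism fixes $y$ while $\bar g$ does not, their composite does not fix $y$ either.  Second, since $\bar g$ already carries a factor $\bar s$, multiplying by $\bar s\bar g^{-1}$ essentially cancels it; the resulting element acts trivially rather than by $-\id$ on $T^{-1}$.  The splitting of $T^{-1}_yM$ into $V_y$ and a horizontal piece, and the attempt to achieve $-\id$ blockwise, is an unnecessary detour once one observes that $\bar g$ itself acts by $-\id$ on all of $T^{-1}$ at its fixed point.

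Your non--homogeneity argument is more careful than the paper's (which does not address this point), but the key claim that the fibration $\pi$ is intrinsic to the restricted geometry is only asserted, not proved.  One clean route is to argue that any automorphism of the restricted non--flat geometry extends to the $K$--homogeneous geometry on $M$ and hence lies in the automorphism group there; since $\exp(\fn)$ is the unipotent radical of that group, its orbit foliation is canonical, and automorphisms descend to $\bar M$.  Without such an argument the non--homogeneity conclusion is not yet justified.
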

\begin{proof}
It follows from the assumptions (2) and (3) that the flat model $(\bar G\to \bar G/\bar P,\omega_{\bar G})$ is symmetric. Moreover, from \cite[Sections 3 and 4]{GZ-Lie} follows, that $(\ba\to M,\om)$ is symmetric parabolic geometry and the set of symmetries at $x$ contains a subset isomorphic to $s\exp(\bar \fp_+)$. Therefore the condition (4) implies that the parabolic geometry $(\ba|_{\exp(\fn)/\exp(\fn_0)\times \bar M}\to \exp(\fn)/\exp(\fn_0)\times \bar M,\om|_{\exp(\fn)/\exp(\fn_0)\times \bar M})$ is symmetric, because the set of symmetries at the points $(\exp(X)\exp(\fn_0),\bar x)\in \exp(\fn)/\exp(\fn_0)\times \bar M$ clearly contains the set of symmetries of  $(\bar G|_{\bar M} \to \bar M, \om_{\bar G}|_{\bar M})$ at the points $\bar x$.
\end{proof}

Let us now discuss, when the conditions (1)--(4) of the Proposition \ref{prop} can be satisfied. 

Firstly, the condition (1) posses only topological restrictions on $M$, $G$ and $P$ that are not restrictive. There is a construction in \cite[Section 3]{GZ-DGA} that transforms non--flat $K$--homogeneous parabolic geometry $(\ba\to M,\om)$ into a parabolic geometry satisfying in addition the condition (1), after a sufficient algebraic completion of $G$ and $P$ and covering of $M$.

On the other hand, the conditions (2) and (3) are highly restrictive for non--flat geometries. We know from \cite{GZ-Lie} that not all types of parabolic geometries can admit a symmetry at point with non--trivial curvature and there is even less types of parabolic geometries that admit more then one symmetry at one point, i.e., non--trivial $\exp(\bar \fp_+)$, see the tables in \cite{GZ-Lie}. Moreover, the non--flat homogeneous symmetric parabolic geometries do not satisfy the condition (2), in general. However, in the article \cite[Section 6 (second construction)]{GZ-Lie}, there is a construction of parabolic geometry $(\ba\to M,\om)$ of type $(G,P)$ satisfying the conditions (1),(2),(3) under the following conditions.

\begin{lemma*}\label{cor}
Suppose the type $(G,P)$ of parabolic geometries satisfies the following conditions:
\begin{itemize}
\item There is $s\in G_0$ acting as $-\id$ on $\fg_{-1}$ and acting as $\id$ on some component of the harmonic curvature of parabolic geometries of type $(G,P)$.
\item The lowest weight $\mu$ in the component of the harmonic curvature, on which $s\in G_0$ acts as $\id$, is preserved by the Cartan involution of complexification of $\fg$.
\end{itemize}
Then there is non--flat $K$--homogeneous parabolic geometry $(\ba\to M,\om)$ of type $(G,P)$ satisfying the conditions (1),(2),(3) of Proposition \ref{prop} for $K$ being the automorphism group of $(\ba\to M,\om)$ and $\mu$ being its curvature. 
\end{lemma*}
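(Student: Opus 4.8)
The plan is to produce the geometry from \emph{algebraic Cartan data}: a Lie group $K$ with a closed subgroup $H$ together with a linear map $\alpha\colon\fk\to\fg$ which is $H$--equivariant with respect to a homomorphism $H\to P$ and the adjoint actions, restricts on $\fh$ to the derivative of that homomorphism, and induces a linear isomorphism $\fk/\fh\cong\fg/\fp$; the associated curvature $\kappa^{\alpha}(X,Y):=[\alpha X,\alpha Y]-\alpha([X,Y])$, read via a splitting as a map $\Lambda^{2}\fg_{-}\to\fg$, must lie in $\ked$ and be of positive homogeneity. By the standard description of homogeneous parabolic geometries (see \cite{GZ-Lie} and the references therein), such data give a regular normal parabolic geometry $(\ba:=K\x_{H}P\to K/H,\om)$ of type $(G,P)$ on which $K$ acts by automorphisms, and it is non--flat precisely when the harmonic part of $\kappa^{\alpha}$ is non--zero, that harmonic part being the curvature referred to in the statement. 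Conditions \ref{prop}(1)--(3) will then be read off from the shape of the data.

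For the data I would run the construction of \cite[Section~6, second construction]{GZ-Lie}. Fix a lowest weight vector $\kappa_{\mu}\in\Lambda^{2}\fg_{-}^{*}\otimes\fg$ representing the harmonic component with lowest weight $\mu$, let $\pi\colon\fg\to\fg_{-}$ be the projection along $\fp$, and take $\fk$ to be the vector space $\fg$ with the deformed bracket
$$[X,Y]_{\fk}:=[X,Y]_{\fg}-\kappa_{\mu}(\pi X,\pi Y),$$
with $\fh:=\fp$ (still a subalgebra for $[\cdot,\cdot]_{\fk}$, since $\pi$ kills $\fp$) and $\alpha:=\id_{\fg}$. Then $\kappa^{\alpha}=\kappa_{\mu}$ by a one--line computation, so the geometry is non--flat with curvature $\mu$, and $\alpha$ induces the identity $\fk/\fh=\fg_{-}\cong\fg/\fp$. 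The hypothesis that $s\in G_{0}$ acts as $\id$ on this component says $\Ad_{s}\kappa_{\mu}=\kappa_{\mu}$, so $\Ad_{s}$ is an automorphism of $[\cdot,\cdot]_{\fk}$ preserving $\fh=\fp$ and acting as $-\id$ on $\fg_{-1}$, i.e. it is a symmetry of $K/H$ at the base point; the hypothesis that $\mu$ is fixed by the Cartan involution of $\fg^{\C}$ ensures that this harmonic component descends to the real form, so that $\kappa_{\mu}$ may be taken real and $[\cdot,\cdot]_{\fk}$ is a genuine real bracket.

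The main point is then the Jacobi identity for $[\cdot,\cdot]_{\fk}$. Its obstruction splits into a term linear in $\kappa_{\mu}$, which is a multiple of the Lie algebra differential $\partial\kappa_{\mu}$ and vanishes because harmonic cochains are $\partial$--closed, and a term quadratic in $\kappa_{\mu}$ lying in the weight $2\mu$ part of $\Lambda^{3}\fg_{-}^{*}\otimes\fg$. The $\Ad_{s}$--invariance of $\kappa_{\mu}$ forces each bihomogeneous piece $\kappa_{\mu}\colon\fg_{i}\x\fg_{j}\to\fg_{\ell}$ to have $\ell-i-j$ even, hence (by positive homogeneity) $\geq 2$; together with the boundedness of the grading this pushes the values of the quadratic term outside the available range of $\fg$, so it vanishes. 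I expect this representation--theoretic bookkeeping --- and checking that the Cartan involution hypothesis really makes it go through --- to be the only delicate part of the argument; everything else is formal.

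It remains to recover \ref{prop}(1)--(3). Once $\kappa_{\mu}\neq 0$ the algebra $\fk$ is no longer semisimple; writing its reductive Levi decomposition as $\fk=\fn\rtimes\bar\fg$ with non--trivial nilradical $\fn$ and semisimple Levi factor $\bar\fg$ chosen adapted to $\fh$, one checks that $\fh$ decomposes as a semidirect product $\exp(\fn_{0})\rtimes\bar P$ with $\bar P$ parabolic in $\bar G$, exactly in the form prescribed in \ref{prop}(2), and, since $s$ preserves this entire structure and acts as $-\id$ on $\fg_{-1}$, the construction of \cite[Section~6, second construction]{GZ-Lie} simultaneously supplies the element $\bar s\in\bar G_{0}$ with $\bar s(u)=us$ of \ref{prop}(3). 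Finally, for \ref{prop}(1) I would invoke \cite[Section~3]{GZ-DGA}: after a suitable algebraic completion of $G$ and $P$ and a covering of $K/H$, the automorphism group becomes an algebraic group acting transitively with the same infinitesimal data, so we may assume $K$ equals that automorphism group. This yields the desired non--flat $K$--homogeneous parabolic geometry of type $(G,P)$, with curvature $\mu$, satisfying \ref{prop}(1)--(3).
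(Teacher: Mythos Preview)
The paper does not give its own proof of this lemma: it simply cites \cite[Section~6 (second construction)]{GZ-Lie} for the construction. Your proposal sketches that construction, but there is a genuine gap in how you set up the algebraic data.

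You take $\fk=\fg$ as a vector space with $[X,Y]_{\fk}=[X,Y]_{\fg}-\kappa_{\mu}(\pi X,\pi Y)$ and $\fh=\fp$. This cannot satisfy the Jacobi identity. For $Z\in\fg_{0}$ and $X,Y\in\fg_{-}$ one computes directly that the Jacobiator equals $(Z\cdot\kappa_{\mu})(X,Y)$, the natural $\fg_{0}$--action on $\Lambda^{2}\fg_{-}^{*}\otimes\fg$. Since $\kappa_{\mu}$ is a \emph{lowest weight vector} of nonzero weight $\mu$ in an irreducible $\fg_{0}$--module, this action is nonzero, so Jacobi fails on $\fg_{0}\times\fg_{-}\times\fg_{-}$. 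Your argument that the linear obstruction is $\partial\kappa_{\mu}$ only treats the case $X,Y,Z\in\fg_{-}$. Consequently $H=P$ is wrong, and with it your reading of conditions (2) and (3).

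The actual construction takes $\fk$ to be a \emph{proper} subspace of $\fg$: one keeps $\fg_{-}$ but replaces $\fp$ by the stabilizer of $\kappa_{\mu}$ in $\fp$. This is visible in the explicit descriptions of the cases (A) and (C) in the paper, where the matrices representing $K$ have many forced zeros in the upper--triangular part and $K$ is manifestly not all of $G$; the constraints such as $\det(L')R_{3}^{-3}R_{n+1}=1$ are exactly the conditions that the $\fg_{0}$--part annihilate $\kappa_{\mu}$. Only after this restriction does the deformed bracket define a Lie algebra, and then $H$ is genuinely smaller than $P$ and acquires the semidirect structure $\exp(\fn_{0})\rtimes\bar P$ required in (2).

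A secondary issue: your explanation of the Cartan--involution hypothesis (``ensures $\kappa_{\mu}$ may be taken real'') is not its role. The harmonic curvature components are already defined over the real form; the condition $\theta(\mu)=\mu$ is instead what guarantees that the stabilizer of $\kappa_{\mu}$ in $P$ has the right reductive Levi decomposition so that $\bar G$, $\bar P$ and $\bar s$ exist as in (2) and (3).
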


Motivated by the construction of non--homogeneous flat examples, we study, whether these geometries satisfy the condition (4) of the Proposition \ref{prop}, when we remove two points from the flat model $(\bar G\to \bar G/\bar P,\bar \om)$. We know that removing two points in the case $dim(\bar G/\bar P)=1$ leads to homogeneous parabolic geometry. Therefore we need to consider the cases, when $dim(\bar G/\bar P)>1$.
If we look in the tables in the article \cite{GZ-Lie}, we get that there are only two series of possible types $(G,P)$ (up to covering) satisfying the conditions of the Lemma \ref{cor} admitting $dim(\bar G/\bar P)$ to be greater than one. Let us point out that we need to choose the projectivizations of the groups in order to satisfy the condition (3) for $n$ odd.

\begin{enumerate}
\item[(A)] Consider $G=PGl(n+1,\mathbb{R})$ and $P$ the stabilizer of the flag $e_1\subset e_1\wedge e_2 \subset e_1\wedge \dots \wedge e_l$ in $\mathbb{R}^{n+1}$ for $n\geq 2l-1$, $l>3$, where $e_1,\dots, e_{n+1}$ is the standard basis of $\R^{n+1}$. Then the group $K$ of the non--flat $K$--homogeneous parabolic geometry $(\ba\to M,\om)$ from Lemma \ref{cor} is (as a set) represented by the matrices from $PGl(n+1,\mathbb{R})$ of the form
$$\begin{pmatrix}
L'_{1,1} & L'_{1,2}  & 0 & 0 & \dots & 0  & 0 \cr 
L'_{2,1}  & L'_{2,2}  & 0 & 0& \dots &0 &   0 \cr
N_{3,1} & N_{3,2} & R_3 &0 & \dots  & 0 & 0 \cr
N_{4,1} &N_{4,2} & Z_{4,3} & L_{1,1} & \dots &L_{1,n-3} &0 \cr
 \vdots & \vdots & \vdots & \vdots & \ddots & \vdots & \vdots \cr
N_{n,1} & N_{n,2} & Z_{n,3} & L_{n-3,1}& \dots & L_{n-3,n-3} & 0 \cr 
N_{n+1,1} & N_{n+1,2} & N_{n+1,3} & N_{n+1,4}& \dots &  N_{n+1,n} & R_{n+1}\end{pmatrix},$$
where all the entries are real numbers such that the equalities 
$$(det(L')R_3det(L)R_{n+1})^2=1,\ \ \ det(L')R_3^{-3}R_{n+1}=1$$
hold for the submatrices $L$, $L'$ formed from elements $L_{i,j}$, $L_{i,j}'$.  This means that $\bar G\cong L'\times L$  is the reductive Levi subgroup of $K$, and the unipotent radical corresponds to $N$ and $Z$. The result of multiplication of two elements $\exp(X_1)\bar g_1$, $\exp(X_2)\bar g_2\in \exp(\fn)\ltimes \bar G$ is $\exp(C(X_1,\Ad_{\bar g_1}(X_2)))\bar g_1\bar g_2\in \exp(\fn)\ltimes \bar G$, where $C(-,-)$ represents the Baker--Campbell--Hausdorff--formula for the nilpotent Lie algebra $\fn$. The difference between the Lie bracket in $\fn$ and the Lie bracket in $\frak{sl}(n+1,\mathbb{R})$ of the matrices representing the elements of $\fn$ is precisely the lowest weight of the harmonic curvature of the parabolic geometries of type $(G,P)$, which takes entries in $N_{3,1}$ and $N_{3,2}$ slots and has values in $N_{n+1,3}$ slot.

The subgroup $\exp(\fn_0)$ corresponds to $Z$ entries and the parabolic subgroup $\bar P$ is product of the stabilizer $Q'$ of $e_1$ in $L'$ and the stabilizer $Q$ of $e_4\wedge \dots \wedge e_l$ in $L$. Thus $\bar G/\bar P$ is product of $L'/Q'\cong \mathbb{R}P^1$ and the space $L/Q$ of Grassmannians of $(l-3)$--planes in $\mathbb{R}^{n-3}$. Finally, the element $\bar s$ is the diagonal matrix with $(1,-1,1, \dots, 1, -1\dots, -1,1)$ for exactly $l$ appearances of $1$ on the diagonal.

\item[(C)] Consider $G=PSp(2n,\mathbb{R})$ and $P$ the stabilizer of the flag of isotropic subspaces $e_1\subset e_1\wedge e_2 \subset e_1\wedge \dots \wedge e_n$ in $\mathbb{R}^{2n}$ for $n>4$, where $e_1,\dots, e_{n}$ and $f_1,\dots,f_n$ are bases of two maximally isotropic subspaces in $\R^{2n}$ satisfying $\Omega(e_i,f_j)=\delta_j^i$ for the natural symplectic form $\Omega$ preserved by $PSp(2n,\mathbb{R})$. Then the group $K$ of the non--flat $K$--homogeneous parabolic geometry $(\ba\to M,\om)$ from Lemma \ref{cor} is (as a set) represented by the matrices in $PSp(2n,\mathbb{R})$ with block structure $\begin{pmatrix} A & B \cr C & * \end{pmatrix}$ w.r.t. to the bases $e_1,\dots, e_{n}$ and $f_1,\dots,f_n$, where

$$A:=\begin{pmatrix}
L'_{1,1} & L'_{1,2}  & 0 & 0 & \dots & 0  \cr 
L'_{2,1}  & L'_{2,2}  & 0 & 0& \dots &0 \cr
N_{3,1} & N_{3,2} & R_3 &0 & \dots  & 0 \cr
N_{4,1} &N_{4,2} & Z_{4,3} & L_{1,1} & \dots &L_{1,n-3} \cr
 \vdots & \vdots & \vdots & \vdots & \ddots & \vdots \cr
N_{n,1} & N_{n,2} & Z_{n,3} & L_{n-3,1}& \dots & L_{n-3,n-3} \cr 
\end{pmatrix},$$
$$B:=\begin{pmatrix}
0 & 0 & 0 & 0& \dots & 0\cr 
0 & 0 & 0 & 0& \dots &  0 \cr 
0 &0 & 0 & 0& \dots &  0 \cr 
0 & 0 & 0 & L_{1,n-2}& \dots &  L_{1,2n-6} \cr 
 \vdots & \vdots & \vdots & \vdots & \ddots & \vdots \cr
0 & 0 & 0 & L_{n-3,n-2}& \dots &  L_{n-3,2n-6} \cr \end{pmatrix},$$
$$C:=\begin{pmatrix}
N_{n+1,1} & N_{n+2,1} & N_{n+3,1} & N_{n+4,1}& \dots &  N_{2n,1} \cr 
* & N_{n+2,2} & N_{n+3,2} & N_{n+4,2}& \dots &  N_{2n,2} \cr 
* &* & N_{n+3,3} & N_{n+4,3}& \dots &  N_{2n,3}\cr 
* & * & * & L_{n-2,1}& \dots &  L_{n-2,n-3} \cr 
 \vdots & \vdots & \vdots & \vdots & \ddots & \vdots  \cr
* & * & * & L_{2n-6,1}& \dots &  L_{2n-6,2n-6} \cr \end{pmatrix},$$
where $*$ entries are uniquely determined by the structure of $Sp(2n,\mathbb{R})$, the matrix $L$ formed by elements $L_{i,j}$ is contained in $CSp(2n-6,\mathbb{R})$ and all remaining entries are real numbers such that the equality
$$det(L')R_3^{-4}=1$$
holds for the submatrix $L'$ formed from elements $L_{i,j}'$.  This means that $\bar G\cong L'\times L$  is the reductive Levi subgroup of $K$, and the unipotent radical corresponds to $N$ and $Z$. The result of multiplication of two elements $\exp(X_1)\bar g_1$, $\exp(X_2)\bar g_2\in \exp(\fn)\ltimes \bar G$ is $\exp(C(X_1,\Ad_{\bar g_1}X_2))\bar g_1\bar g_2\in \exp(\fn)\ltimes \bar G$, where $C(-,-)$ represents the Baker--Campbell--Hausdorff--formula for the nilpotent Lie algebra $\fn$. The difference between the Lie bracket in $\fn$ and the Lie bracket in $\frak{sp}(2n,\mathbb{R})$ of the matrices representing the elements of $\fn$ is precisely the lowest weight of the harmonic curvature of the parabolic geometries of type $(G,P)$, which takes entries in $N_{3,1}$ and $N_{3,2}$ slots and has values in $N_{n+3,3}$ slot.

The subgroup $\exp(\fn_0)$ corresponds to $Z$ entries and the parabolic subgroup $\bar P$ is product of the stabilizer $Q'$ of $e_1$ in $L'$ and the stabilizer $Q$ of $e_4\wedge \dots \wedge e_n$ in $L$. Thus $\bar G/\bar P$ is product of $L'/Q'\cong \mathbb{R}P^1$ and the space $L/Q$ of the maximally isotropic (w.r.t. $\Omega$) Grassmannians of $(n-3)$--planes in $\mathbb{R}^{2n-6}$. Finally, the element $\bar s$ is the diagonal matrix with $(1,-1,1, \dots, 1)$ on the first $n$ entries of the diagonal.
\end{enumerate}

Since $\bar G/\bar P\cong \mathbb{R}P^1\times L/Q$ is product of two flat models of parabolic geometries in both of the above cases (A) and (C), we remove two points from the flat model $(L \to L/Q, \om_{L})$ and consider $\bar M:= \mathbb{R}P^1\times (L/Q -\{l_1Q,l_2Q\})$ for some $l_1Q,l_2Q \in L/Q$. Then the flat parabolic geometry $(\bar G\to \bar G/\bar P,\om_{\bar G})$ restricts to a parabolic geometry over $\bar M$ of the same type $(\bar G, \bar P)$. Its automorphisms group consists of direct product of $L'$ and those element of $L$ that preserve the set $\{l_1Q,l_2Q\}$. Thus it decomposes into two components according to the fact whether it preserves $l_1Q$ and $l_2Q$ or whether it swaps $l_1Q$ and $l_2Q$. This property restrict also the possible symmetries on $\bar M$ and there is a natural question, whether at least some symmetries on $\bar G/\bar P$ survive the restriction to $\bar M$. There is the following crucial statement.

\begin{thm*} \label{main}
Let $(L,Q)$ be one of the types of parabolic geometries from the above series (A) or (C). Then $\bar M:= \mathbb{R}P^1\times (L/Q -\{l_1Q,l_2Q\})$ satisfies the condition (4) of the Proposition \ref{prop} if and only if there is $q\in Q$ such that $q l_1^{-1}l_2Q=e_4\wedge \dots \wedge e_{l-1}\wedge e_{l+1}$ in the case (A) or $q l_1^{-1}l_2Q=e_4\wedge \dots \wedge e_{n-1}\wedge f_{n}$ in the case (C).
\end{thm*}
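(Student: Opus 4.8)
The plan is to reduce the statement to a question of linear algebra on the Grassmannian factor $L/Q$ and then answer it by classifying the relative position of three subspaces. First I would reduce the notion of symmetry. Since $\bar M=\mathbb{R}P^1\times(L/Q-\{l_1Q,l_2Q\})$ is open, connected and dense in $\bar G/\bar P$, rigidity of flat parabolic geometries (as used in \cite{ja-springer,ja-CEJM,GZ-Srni15}) shows that every automorphism of the restricted geometry extends to an automorphism of the flat model preserving $\bar M$; since $\Aut(\bar G/\bar P)=\bar G=L'\x L$ and the removed points lie in the $L/Q$--factor, these are the $(a,g)$ with $g\in L$ preserving $\{l_1Q,l_2Q\}$. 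Hence a symmetry at $(\xi,\eta)$ splits into a symmetry of $\mathbb{R}P^1$ at $\xi$, which always exists, and an element $g\in L$ fixing $\eta$, acting as $-\id$ on $T_\eta(L/Q)$ and preserving $\{l_1Q,l_2Q\}$. The symmetries at $\eta$ of the AHS flat model $L/Q$ are exactly the involutions $g_{\eta,W}:=\id_\eta\oplus(-\id_W)$, $W$ a complement of $\eta$ in $\R^{n-3}$ (in case (C), $\eta$ is a Lagrangian and $W$ must be a Lagrangian complement, so that $g_{\eta,W}\in CSp$). Thus condition (4) for $\bar M$ is equivalent to: for every $\eta\neq l_1Q,l_2Q$ there is a (Lagrangian) complement $W$ of $\eta$ such that $g_{\eta,W}$ either fixes both $l_1Q,l_2Q$ or interchanges them; non--homogeneity is automatic because $\dim(L/Q)\geq2$ and the relative position to the removed pair is a non--constant invariant.

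Next I would record the relevant linear algebra. Put $\eta_i:=l_iQ$. A $k$--plane $U$ is $g_{\eta,W}$--invariant iff $U=(U\cap\eta)\oplus(U\cap W)$, and since triples of subspaces of a vector space carry no moduli, the existence of a suitable $W$ depends only on the dimensions of intersections and sums of $\eta,\eta_1,\eta_2$. Concretely, some $W$ makes $g_{\eta,W}$ fix both $\eta_1,\eta_2$ iff $\eta\cap(\eta_1+\eta_2)=(\eta\cap\eta_1)+(\eta\cap\eta_2)$; and some $W$ makes $g_{\eta,W}$ interchange them iff $\eta\cap\eta_1\subseteq\eta_2$, $\eta\cap\eta_2\subseteq\eta_1$ and $\dim(\eta\cap(\eta_1+\eta_2))=\dim(\eta\cap\eta_1\cap\eta_2)+r$ with $r:=k-\dim(\eta_1\cap\eta_2)$. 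In both cases the bounds $n\geq2l-1$, resp. $n>4$, leave enough room to extend $g_{\eta,W}$ from $\eta_1+\eta_2$ to the whole space; in case (C) one additionally keeps $W$ Lagrangian throughout, which is the point where the symplectic case departs from the linear one.

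For the ``if'' direction, assume $\eta_1,\eta_2$ adjacent, i.e. $r=1$, and write $\eta_1=S\oplus\langle p\rangle$, $\eta_2=S\oplus\langle q\rangle$ with $S=\eta_1\cap\eta_2$. For an arbitrary $\eta\neq\eta_1,\eta_2$ the image of $\eta\cap(\eta_1+\eta_2)$ in the plane $(\eta_1+\eta_2)/S$ is either $0$, the line spanned by the image of $p$, the line spanned by the image of $q$, a diagonal line (the image of $\alpha p+\beta q$ with $\alpha,\beta\neq0$), or the whole plane; one checks that in the diagonal case the interchange criterion holds, and in each of the other four cases the fixing--both criterion holds, so $\eta$ carries a symmetry. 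For the ``only if'' direction, assume $\eta_1,\eta_2$ not adjacent, $r\geq2$, and write $\eta_i=S\oplus D_i$ with $\dim D_i=r$, $D_1\cap D_2=0$. Choose $d_1\in D_1$, a hyperplane $V_1\subset D_1$ with $d_1\notin V_1$ and an injection $\varphi:V_1\to D_2$ (in case (C) with $\{v+\varphi(v):v\in V_1\}$ isotropic modulo $S$), and set $\eta:=S\oplus\langle d_1\rangle\oplus\{v+\varphi(v):v\in V_1\}$. Then $\eta\neq\eta_1,\eta_2$; $\eta\cap\eta_1=S\oplus\langle d_1\rangle\not\subseteq\eta_2$, so no $g_{\eta,W}$ interchanges $\eta_1,\eta_2$; and $\eta\subseteq\eta_1+\eta_2$ while $(\eta\cap\eta_1)+(\eta\cap\eta_2)=S\oplus\langle d_1\rangle\subsetneq\eta$, so no $g_{\eta,W}$ fixes both. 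Hence $\eta$ has no symmetry and condition (4) fails.

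Finally, using that the $Q$--orbit of a point of $L/Q$ is determined by the dimension of its intersection with the base point $\langle e_4,\dots,e_l\rangle$, resp. $\langle e_4,\dots,e_n\rangle$, one sees that $r=1$ is exactly the stated condition $q\,l_1^{-1}l_2Q=\langle e_4,\dots,e_{l-1},e_{l+1}\rangle$, resp. $\langle e_4,\dots,e_{n-1},f_n\rangle$, for some $q\in Q$, which completes the proof. The hard part will be the case analysis and the ``room to extend'' estimates in the ``if'' direction under the sharp bounds $n\geq2l-1$ and $n>4$, and carrying all of this through case (C), where the extra requirement that $W$ be Lagrangian shrinks the available symmetries and must be shown not to spoil the ``if'' direction.
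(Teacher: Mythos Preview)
Your proposal is correct in outline and takes a genuinely different route from the paper. The paper proves the theorem via Lie--theoretic machinery: it parametrises $Q\backslash L/Q$ by the Hasse diagram $\mathcal{W}^{\fq}$, decomposes $L/Q$ into Schubert cells $\exp(Z)wQ$, and then analyses the existence of preserving and swapping symmetries cell by cell using root--space computations and the Baker--Campbell--Hausdorff formula. Your argument instead identifies the symmetries at $\eta$ of the $|1|$--graded model $L/Q$ concretely as the involutions $g_{\eta,W}=\id_\eta\oplus(-\id_W)$ for (Lagrangian) complements $W$, and reduces the whole question to elementary linear algebra of the relative position of three $k$--planes. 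This is more geometric and more elementary; in particular it delivers directly the corollary the paper states after the theorem (that condition (4) holds iff $\dim(\eta_1\cap\eta_2)=k-1$, and that a preserving symmetry exists at $\eta$ iff $\eta\cap(\eta_1+\eta_2)\subseteq\eta_1$ or $\subseteq\eta_2$), which in the paper is extracted only a posteriori from the root--space analysis. Conversely, the paper's approach is type--independent in spirit and would transfer more readily to other flat models where an explicit linear model of $L/Q$ is less transparent.

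Two points deserve care when you write this out. First, your swap criterion (``$\eta\cap\eta_i\subseteq\eta_j$ and $\dim(\eta\cap(\eta_1+\eta_2))=\dim(\eta\cap\eta_1\cap\eta_2)+r$'') is necessary, but sufficiency --- actually constructing the complement $W$ that realises the swap --- needs an explicit argument, not just the slogan that triples of subspaces have no moduli (that slogan classifies triples, not quadruples $(\eta,\eta_1,\eta_2,W)$). In the $r=1$ case this is easy, but say so. Second, as you yourself flag, case~(C) requires $W$ to be a Lagrangian complement throughout; in the fixing cases you must check that the complement $W_1+W_2$ you build can be extended to a Lagrangian complement of $\eta$, and in the swapping case that the swap can be realised symplectically. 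These are routine but not automatic, and the paper's proof avoids them only because its root--space bookkeeping is already adapted to the symplectic structure.
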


The proof of the Theorem \ref{main} is fairly technical and we give the proof in the next section.
In fact, the proof of the Theorem \ref{main} is equivalent to the proof of the following statement.

\begin{cor*} \label{main2}
Let $(L,Q)$ be one of the types of parabolic geometries from the above series (A) or (C). Then the flat model $(L\to L/Q,\om_L)$ restricts to symmetric parabolic geometry on $L/Q -\{l_1Q,l_2Q\}$ if and only if there is $q\in Q$ such that $q l_1^{-1}l_2Q=e_4\wedge \dots \wedge e_{l-1}\wedge e_{l+1}$ in the case (A) or $q l_1^{-1}l_2Q=e_4\wedge \dots \wedge e_{n-1}\wedge f_{n}$ in the case (C).
\end{cor*}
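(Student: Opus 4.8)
The plan is to reduce the statement, in both cases (A) and (C), to a concrete question about the action of the automorphism group of $(L\to L/Q,\om_L)$ on pairs of points of the Grassmannian $L/Q$, and then to verify the symmetry condition orbit by orbit. Since $\bar G/\bar P\cong \R P^1\times L/Q$ and the $\R P^1$ factor is one--dimensional, the flat parabolic geometry on $\bar M=\R P^1\times(L/Q-\{l_1Q,l_2Q\})$ is symmetric if and only if the flat model $(L\to L/Q,\om_L)$ restricted to $L/Q-\{l_1Q,l_2Q\}$ is symmetric; this is exactly the equivalence between Theorem~\ref{main} and Corollary~\ref{main2}, so it suffices to prove Corollary~\ref{main2}. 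First I would normalize: acting by $L$ we may assume $l_1Q=eQ$ is the base flag $e_4\wedge\dots\wedge e_l$ (resp. $e_4\wedge\dots\wedge e_n$), so that the relevant invariant of the pair $\{l_1Q,l_2Q\}$ is the $Q$--orbit of $l_1^{-1}l_2Q$, i.e. the relative position of the two planes. The finitely many such relative positions are indexed by the dimensions of the intersections of $l_2Q$ with the subspaces in the flag defining $Q$ (for (C) one must also record the rank of the restriction of $\Om$), and this is the combinatorial bookkeeping that the next section carries out.

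The core of the argument is the following: a symmetry of the restricted geometry at a point $mQ\in L/Q-\{l_1Q,l_2Q\}$ is (by the Lemma in the Introduction, applied to the flat model) a left multiplication by an element of the form $m\,\si\exp(-\Ad_\si(Y))\exp(Y)\,m^{-1}$ with $\si\in Q_0$ acting as $-\id$ on the relevant $\fg_{-1}$, which moreover must map the punctured manifold to itself, i.e. must preserve the two--point set $\{l_1Q,l_2Q\}$. So the real content is: \emph{for which relative positions of $l_1Q,l_2Q$ can one, at every point $mQ$ of the complement, choose the free parameter $Y\in\fp_+$ (and possibly $\si$) so that the resulting symmetry either fixes both $l_1Q$ and $l_2Q$ or interchanges them?} I would split $L/Q-\{l_1Q,l_2Q\}$ into the orbits of the subgroup of $L$ preserving $\{l_1Q,l_2Q\}$ — as noted in the text this subgroup meets the ``swap'' component precisely when the pair is in ``generic enough'' position — and on each orbit reduce the existence of a suitable $Y$ to a system of linear/quadratic equations coming from the condition that the symmetry at $mQ$ fixes (or swaps) the two planes. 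For the sufficiency direction one exhibits such $Y$ explicitly in the claimed special position $l_1^{-1}l_2Q=e_4\wedge\dots\wedge e_{l-1}\wedge e_{l+1}$ (resp. $e_4\wedge\dots\wedge e_{n-1}\wedge f_n$), which is the ``codimension one'' relative position where the swap component acts and the equations are solvable uniformly in $mQ$; for necessity one shows that in every other relative position there is at least one point $mQ$ of the complement at which no admissible $Y$ exists — typically because the swap is unavailable and the ``fix both'' condition over--determines $Y$.

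The last step is to run this analysis separately for the two series. In case (A), $L/Q$ is the Grassmannian of $(l-3)$--planes in $\R^{n-3}$ and $s$--admissible $\si$ are characterized by the explicit diagonal form of $\bar s$ given in the text; one checks that the only $Q$--orbit of second planes for which the symmetries survive on the whole complement is the one where $l_1^{-1}l_2Q$ differs from the base plane by replacing a single basis vector $e_l$ by $e_{l+1}$. In case (C) one repeats the computation on the Lagrangian--type Grassmannian $L/Q$ of $(n-3)$--planes in $\R^{2n-6}$, where the extra constraint is that the relevant $q\,l_1^{-1}l_2Q$ becomes $e_4\wedge\dots\wedge e_{n-1}\wedge f_n$, i.e. one swaps an $e$ for the ``dual'' vector $f_n$, which is the symplectic analogue of the codimension one position. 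I expect the main obstacle to be the necessity direction: proving that for \emph{every} relative position other than the distinguished one there exists a bad point in the complement requires either a clean invariant obstructing the construction of $Y$ at that point, or a somewhat lengthy case distinction over the possible intersection dimensions; organizing this so that the two series (A) and (C) are handled in parallel, rather than by separate ad hoc computations, will be the delicate part.
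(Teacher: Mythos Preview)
Your outline is sound and matches the paper's overall strategy: normalize to $l_1Q=eQ$, parametrize symmetries at $mQ$ as conjugates of $s\exp(Y)$, and check at each point whether some $Y$ makes the symmetry preserve or swap $\{eQ,vQ\}$. The genuine difference is the decomposition you propose to use. You want to cut $L/Q-\{eQ,vQ\}$ into orbits of the stabilizer of $\{eQ,vQ\}$ in $L$ and analyse each orbit; the paper instead uses the Schubert (Bruhat) decomposition $L/Q=\bigsqcup_{w\in\mathcal{W}^{\fq}}\exp(\Ad_w^{-1}(\fl_{-1})\cap\fb_+)\,wQ$ coming from the Hasse diagram of $\fq$, and represents the class of $v$ in $Q\backslash L/Q\cong W(\fl_0)\backslash W(\fl)/W(\fl_0)$ by a shortest Weyl element. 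This buys two things. First, for necessity the paper does \emph{not} go case by case over relative positions: it shows in a single root--space computation (Lemma~\ref{l1}) that whenever $v$ has Weyl length $>1$, no symmetry exists at any point $\exp(Z)wQ$ with $Z\neq 0$ in the unique length--$1$ cell --- so the same bad points kill every non--distinguished $v$ simultaneously. Second, for sufficiency the parametrization $m=\exp(Z)w$ reduces both the ``preserve'' and ``swap'' conditions to statements about the component of $Z$ (and of $\Ad_w(Y)$) in the single simple root space $\alpha_{l-3}$ (resp.\ $\alpha_{n-3}$), which is what makes the argument uniform in (A) and (C). Your orbit--based plan would reach the same conclusion but replaces this single root--space bookkeeping by the intersection--dimension casework you anticipate; the paper's Schubert/Hasse machinery is exactly the ``clean invariant'' you were hoping for. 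One small correction: the distinguished relative position is the \emph{least} generic nontrivial one (Weyl length $1$, intersection of codimension one), not a ``generic enough'' one; the swap component exists for many positions, and its mere existence is not what singles out length $1$.
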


Let us give the geometric interpretation of the condition in the Theorem \ref{main} and interpret the condition for the existence of preserving symmetry from Lemma \ref{l5}.

\begin{cor*}
Let $(L,Q)$ be one of the types of parabolic geometries from the above series (A) or (C). Then $\bar M:=  \mathbb{R}P^1\times (L/Q -\{l_1Q,l_2Q\})$ satisfies the condition (4) of the Proposition \ref{prop} if and only if the subspaces $W_1$ and $W_2$ corresponding to $l_1Q$ and $l_2Q$ have intersection of dimension $dim(W_1)-1=dim(W_2)-1.$ There is symmetry preserving the subspaces $W_1$ and $W_2$ at the point of $L/Q -\{l_1Q,l_2Q\}$ corresponding to subspace $W$ if and only if the intersection $W\cap (W_1+W_2)$ is contained in $W_1$ or $W_2$.
\end{cor*}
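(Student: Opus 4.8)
The plan is to obtain the corollary by translating the combinatorial conditions of Theorem \ref{main} and Lemma \ref{l5} into statements about the relative position of the subspaces $W$, $W_1$, $W_2$.

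For the first assertion I start from Theorem \ref{main}: condition (4) for $\bar M$ holds if and only if there is $q\in Q$ with $ql_1^{-1}l_2Q$ equal to the fixed plane $W^{*}$ ($=e_4\wedge\dots\wedge e_{l-1}\wedge e_{l+1}$ in case (A), $=e_4\wedge\dots\wedge e_{n-1}\wedge f_n$ in case (C)). Identifying $L/Q$ with the Grassmannian of $(l-3)$-planes in $\R^{n-3}$, resp. of maximal isotropic $(n-3)$-planes in $\R^{2n-6}$, by $lQ\mapsto l\cdot W_0$ with $W_0$ the $Q$-fixed plane, one has $W_i=l_iW_0$, so the displayed condition says exactly that $l_1^{-1}W_2$ lies in the $Q$-orbit of $W^{*}$, where $Q=\mathrm{Stab}_L(W_0)$. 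The $Q$-orbits on $L/Q$ are classified, in case (A) and in the Lagrangian case (C) alike, by the dimension of the intersection with $W_0$ (a standard Bruhat/Witt computation); since $\dim(W^{*}\cap W_0)=\dim W_0-1$ by inspection and $\dim(l_1^{-1}W_2\cap W_0)=\dim(W_2\cap W_1)$, this becomes $\dim(W_1\cap W_2)=\dim W_1-1=\dim W_2-1$, and conversely.

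For the second assertion I use Lemma \ref{l5}, which characterises the existence of a preserving symmetry at the point of $L/Q-\{l_1Q,l_2Q\}$ corresponding to $W$ by an explicit condition. To read this geometrically, recall that a symmetry of the flat AHS geometry $L/Q$ fixing $W$ is a linear involution $\sigma_{W,W^{c}}=\mathrm{id}_{W}\oplus(-\mathrm{id}_{W^{c}})$ for some complement $W^{c}$ of $W$ (isotropic, in case (C)), and that $\sigma_{W,W^{c}}$ preserves a subspace $U$ if and only if $U=(U\cap W)\oplus(U\cap W^{c})$. Thus a preserving symmetry at $W$ exists iff $W$ admits a complement $W^{c}$ splitting both $W_1$ and $W_2$, and I claim this is equivalent to $W\cap(W_1+W_2)\subseteq W_1$ or $W\cap(W_1+W_2)\subseteq W_2$. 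For the implication $\Leftarrow$ I build $W^{c}$ explicitly, choosing complements $C_i$ of $W_i\cap W$ inside $W_i$ whose images in $V/W$ agree on $\overline{W_1}\cap\overline{W_2}$ — possible because the containment $W\cap(W_1+W_2)\subseteq W_1$ (or $W_2$), combined with $\dim(W_1\cap W_2)=\dim W_1-1$, forces that common image to lift into $W_1\cap W_2$ — and then extending $C_1+C_2$ to a complement of $W$. For $\Rightarrow$ I restrict $\sigma_{W,W^{c}}$ to the $(\dim W_1+1)$-dimensional space $W_1+W_2$, where it becomes an involution preserving the distinct hyperplanes $W_1,W_2$ and the subspace $W\cap(W_1+W_2)$; the eigenspace bookkeeping there, together with the constraint supplied by Lemma \ref{l5}, forces $W\cap(W_1+W_2)$ into $W_1$ or $W_2$. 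In case (C) I check in addition that $W^{c}$ may be taken isotropic, using that $W_1\cap W_2$ is the radical of the symplectic form restricted to $W_1+W_2$ together with a Witt-type extension, so that the same geometric condition applies.

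The step I expect to be the main obstacle is precisely this linear-algebra dictionary for the second assertion: matching the coordinate condition of Lemma \ref{l5} to the invariant formulation, and carrying the isotropy constraint of case (C) through the construction so that cases (A) and (C) are governed by a single condition. The first assertion, by contrast, reduces to the standard classification of the orbits of a maximal parabolic on a (Lagrangian) Grassmannian and is routine.
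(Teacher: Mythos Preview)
The paper states this corollary as a geometric reinterpretation of Theorem~\ref{main} and Lemma~\ref{l5} and gives no proof, so there is nothing to compare against; one can only check whether your argument establishes the claim. Your treatment of the first assertion is fine: the $Q$--orbits on the (Lagrangian) Grassmannian are indeed parametrised by $\dim(W_0\cap\,\cdot\,)$, and this reduces Theorem~\ref{main} to the stated dimension condition.

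The second assertion, however, is not correct as written, and this is exactly where your argument breaks. Take case~(A) with $k=l-3=2$, $W_1=\langle e_1,e_2\rangle$, $W_2=\langle e_1,e_3\rangle$ and $W=\langle e_2,e_3\rangle$. Then $W\cap(W_1+W_2)=W$ is contained in neither $W_1$ nor $W_2$, yet $W^c=\langle e_1,e_4,\dots,e_{n-3}\rangle$ is a complement of $W$ with $W_i=(W_i\cap W)\oplus(W_i\cap W^c)$ for $i=1,2$, so $\sigma_{W,W^c}$ is a symmetry at $W$ preserving $W_1$ and $W_2$. This agrees with Lemma~\ref{l5}: $W$ is a $T$--fixed point, hence $Z=0$ has trivial $\alpha_{l-3}$--component. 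In your sketch of the implication ``$\Rightarrow$'' you restrict $\sigma$ to $W_1+W_2$ and appeal to ``eigenspace bookkeeping''; but in this example $\sigma$ acts as the identity on the $2$--dimensional quotient $(W_1+W_2)/(W_1\cap W_2)$, and nothing forces the $+1$--eigenspace $W\cap(W_1+W_2)$ into $W_1$ or $W_2$. Invoking Lemma~\ref{l5} at that point does not help, since Lemma~\ref{l5} is precisely what produces the counterexample.

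What the splitting--complement characterisation (which is the correct description of preserving symmetries) is actually equivalent to is
\[
(W\cap W_1)+(W\cap W_2)=W\cap(W_1+W_2),
\]
and one checks directly that this matches the Schubert--cell condition of Lemma~\ref{l5}. So either the corollary in the paper contains a misprint, or ``contained in $W_1$ or $W_2$'' is meant in a sense different from the obvious one; in any case your argument cannot close the gap because the statement you are aiming at fails for $W=\langle e_2,e_3\rangle$.
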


The automorphisms group of the parabolic geometry $(\ba|_{\exp(\fn)/\exp(\fn_0) \times \bar M}\to \exp(\fn)/\exp(\fn_0)\times \bar M,\om|_{\exp(\fn)/\exp(\fn_0)\times \bar M})$ in the case (A) for $\bar M:= \mathbb{R}P^1\times (L/Q -\{e_4\wedge \dots \wedge e_{l-1}\wedge e_{l},e_4\wedge \dots \wedge e_{l-1}\wedge e_{l+1}\})$ has two components. The component of identity consists of (semidirect) product of $L'$, $\exp(\fn)$ and the following matrices in $L$:

$$\begin{pmatrix}
 L_{1,1} & \dots &L_{1,l} & L_{1,l+1} & L_{1,l+2}& \dots & L_{1,n-3} \cr
 \vdots & \ddots & \vdots & \vdots&  \vdots & \ddots & \vdots   \cr
 L_{l-1,1}& \dots & L_{l-1,l} & L_{l-1,l+1} & L_{l-1,l+2}& \dots & L_{l-1,n-3}\cr 
 0& \dots & {\pmb L_{l,l}} &0 &L_{l,l+2}& \dots & L_{l,n-3}  \cr 
 0& \dots &0&{\pmb L_{l+1,l+1}}& L_{l+1,l+2}& \dots & L_{l+1,n-3} \cr 
 \vdots & \ddots & \vdots & \vdots& \vdots & \ddots & \vdots \cr
 0& \dots & 0& 0& L_{n-3,l+2} &\dots & L_{n-3,n-3} \cr 
\end{pmatrix}.$$
The other component consists of (semidirect) product of $L'$, $\exp(\fn)$ and the following matrices in $L$:
$$\begin{pmatrix}
 L_{1,1} & \dots &L_{1,l} & L_{1,l+1} & L_{1,l+2}& \dots & L_{1,n-3} \cr
 \vdots & \ddots & \vdots & \vdots&  \vdots & \ddots & \vdots   \cr
 L_{l-1,1}& \dots & L_{l-1,l} & L_{l-1,l+1} & L_{l-1,l+2}& \dots & L_{l-1,n-3}\cr 
 0& \dots &0&{\pmb L_{l+1,l+1}} &L_{l+1,l+2}& \dots & L_{l+1,n-3}  \cr 
 0& \dots &{\pmb L_{l,l}}&0& L_{l,l+2}& \dots & L_{l,n-3} \cr 
 \vdots & \ddots & \vdots & \vdots& \vdots & \ddots & \vdots \cr
 0& \dots & 0& 0& L_{n-3,l+2} &\dots & L_{n-3,n-3} \cr 
\end{pmatrix}.$$

The automorphisms group of the parabolic geometry $(\ba|_{\exp(\fn)/\exp(\fn_0) \times \bar M}\to \exp(\fn)/\exp(\fn_0)\times \bar M,\om|_{\exp(\fn)/\exp(\fn_0)\times \bar M})$ in the case (C) for $\bar M:= \mathbb{R}P^1\times (L/Q -\{e_4\wedge \dots \wedge e_{n-1}\wedge e_{n},e_4\wedge \dots \wedge e_{n-1}\wedge f_{n}\})$ has two components. The component of identity consists of (semidirect) product of $L'$, $\exp(\fn)$ and the following matrices in $L$:
$$\begin{pmatrix}
L_{1,1} & \dots  & L_{1,n} & L_{1,n+1} & \dots  & L_{1,2n-1}  & L_{1,2n} \cr 
\vdots  & \ddots  & \vdots &\vdots  & \ddots  & \vdots &   \vdots \cr
L_{n-1,1} & \dots & L_{n-1,n} &L_{n-1,n+1} & \dots  & L_{n-1,2n-1}  & L_{n-1,2n} \cr
0 & \dots & {\pmb L_{n,n}} & L_{n,n+1} & \dots &L_{n,2n-1} &0 \cr
0 & \dots & 0 & L_{n+1,n+1}& \dots & L_{n-2,n-3} & 0 \cr 
 \vdots & \ddots & \vdots & \vdots & \ddots & \vdots & \vdots \cr
0 & \dots & 0 & L_{2n-1,n+1}& \dots & L_{2n-1,2n-1} & 0 \cr 
0 & \dots & 0 & L_{2n,n+1}& \dots & L_{2n,2n-1} & {\pmb L_{2n,2n}} \cr 
 \end{pmatrix}.$$
The other component consists of product of $L'$, $\exp(\fn)$ and the following matrices in $L$:
$$\begin{pmatrix}
L_{1,1} & \dots  & L_{1,n} & L_{1,n+1} & \dots  & L_{1,2n-1}  & L_{1,2n} \cr 
\vdots  & \ddots  & \vdots &\vdots  & \ddots  & \vdots &   \vdots \cr
L_{n-1,1} & \dots & L_{n-1,n} &L_{n-1,n+1} & \dots  & L_{n-1,2n-1}  & L_{n-1,2n} \cr
0 & \dots & 0 & L_{2n,n+1} & \dots &L_{2n,2n-1} &{\pmb L_{2n,2n}} \cr
0 & \dots & 0 & L_{n+1,n+1}& \dots & L_{n-2,n-3} & 0 \cr 
 \vdots & \ddots & \vdots & \vdots & \ddots & \vdots & \vdots \cr
0 & \dots& 0 & L_{2n-1,n+1}& \dots & L_{2n-1,2n-1} & 0 \cr 
0 & \dots & {\pmb L_{n,n}} & L_{n,n+1}& \dots & L_{n,2n-1} & 0 \cr 
 \end{pmatrix}.$$
Therefore, there is the following characterization of orbits of the automorphisms group in $\exp(\fn)/\exp(\fn_0)\times \bar M$.

\begin{prop*}
In the case (A) or (C), the points $(\exp(X_1)\exp(\fn_0),l_1'Q',W_3)$ and $(\exp(X_2)\exp(\fn_0),l_2'Q',W_4)$ for the Grasmainans $W_3,W_4$ in $L/Q-\{W_1,W_2\}$ are points in the same orbit of the automorphism group of the parabolic geometry $(\ba|_{\exp(\fn)/\exp(\fn_0) \times \bar M}\to \exp(\fn)/\exp(\fn_0)\times \bar M,\om|_{\exp(\fn)/\exp(\fn_0)\times \bar M})$  if and only if 
\begin{align*}
dim(W_3\cap W_2\cap W_1)&=dim(W_4\cap W_2\cap W_1),\\
dim(W_3\cap (W_2+ W_1))&=dim(W_4\cap (W_2+W_1)),\\
dim(W_3\cap (W_2\sqcup W_1))&=dim(W_4\cap (W_2\sqcup W_1)),
\end{align*}
where $W_2 \sqcup W_1$ is the union of $W_2$ and $W_1$ as algebraic sets.
\end{prop*}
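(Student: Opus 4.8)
The plan is to reduce the orbit classification to understanding how the two components of the automorphism group act on the configuration space of Grassmannians relative to the fixed flag $W_1, W_2$, and to notice that the $\exp(\fn)/\exp(\fn_0)$ and $L'/Q'$ factors contribute nothing essential to the orbit decomposition. Indeed, since the automorphism group contains $L'$ acting transitively on $L'/Q'\cong \mathbb{R}P^1$ and contains $\exp(\fn)$ acting transitively on $\exp(\fn)/\exp(\fn_0)$ (these factors act trivially on the $L/Q$ component), a point $(\exp(X_1)\exp(\fn_0), l_1'Q', W_3)$ lies in the same orbit as $(\exp(X_2)\exp(\fn_0), l_2'Q', W_4)$ if and only if $W_3$ and $W_4$ lie in the same orbit of the group $\Aut_0(L/Q-\{W_1,W_2\})$ described by the two matrix blocks displayed above. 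So the whole statement is equivalent to a linear-algebra orbit classification: classify $(l-3)$-planes (resp. maximally isotropic $(n-3)$-planes) in $\mathbb{R}^{n-3}$ (resp. $\mathbb{R}^{2n-6}$) distinct from $W_1, W_2$, under the subgroup of $GL$ (resp. $Sp/CSp$) preserving the pair $\{W_1,W_2\}$, where by the Theorem's hypothesis $W_1$ and $W_2$ meet in a hyperplane of each.

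First I would fix a good normal form: by the Theorem's condition we may assume $W_1 = \langle e_4,\dots,e_{l-1},e_l\rangle$ and $W_2 = \langle e_4,\dots,e_{l-1},e_{l+1}\rangle$ (resp. the symplectic analogue with $e_l \rightsquigarrow e_n$, $e_{l+1}\rightsquigarrow f_n$), so that $W_1\cap W_2 = \langle e_4,\dots,e_{l-1}\rangle$ has codimension one in each, $W_1 + W_2 = \langle e_4,\dots,e_{l+1}\rangle$, and $W_1 \sqcup W_2$ (as an algebraic set, the union of the two hyperplanes through $W_1\cap W_2$ inside $W_1+W_2$) is the vanishing locus of a single rank-two quadratic form. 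The group $\Aut_0$ is exactly the connected stabilizer of this flag: block-lower-triangular matrices preserving the chain $0 \subset W_1\cap W_2 \subset W_1+W_2 \subset \mathbb{R}^{n-3}$ together with the data of the two lines $W_i/(W_1\cap W_2)$ inside the two-dimensional quotient $(W_1+W_2)/(W_1\cap W_2)$; the second displayed component additionally swaps those two lines (this is visible from the displayed matrices: the bold entries $L_{l,l}$ and $L_{l+1,l+1}$ occupy swapped positions). I would then compute the invariants: for a subspace $W$ not equal to $W_1$ or $W_2$, the triple
\begin{align*}
a(W) &= \dim(W\cap W_1\cap W_2),\\
b(W) &= \dim(W\cap(W_1+W_2)),\\
c(W) &= \dim(W\cap (W_1\sqcup W_2)),
\end{align*}
is manifestly invariant under $\Aut_0$ (the first two because $\Aut_0$ preserves $W_1\cap W_2$ and $W_1+W_2$, the third because it preserves the algebraic set $W_1\sqcup W_2$). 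The necessity direction of the Proposition is then immediate; the content is sufficiency.

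For sufficiency I would argue that these three numbers determine the $\Aut_0$-orbit. The key observation is that $c(W) \in \{b(W)-1, b(W)\}$ always, and $c(W) = b(W)$ precisely when $W\cap(W_1+W_2)$ is contained in $W_1$ or in $W_2$ — exactly the preservation-of-symmetry condition isolated in the earlier Corollary. I would split into the case $c(W)=b(W)$ (where $W\cap(W_1+W_2)$ lies in a single $W_i$, and one uses the full stabilizer of $W_1\cap W_2 \subset W_i \subset W_1 + W_2 \subset \mathbb{R}^{n-3}$, acting with a single orbit on subspaces realizing given $(a,b)$ — a standard transitivity-of-$GL$-on-flags-of-fixed-type argument, using the swap component only to move between the "inside $W_1$" and "inside $W_2$" alternatives) and the case $c(W) = b(W)-1$ (where $W\cap(W_1+W_2)$ is a subspace of $W_1+W_2$ containing $W_1\cap W_2$-part of prescribed dimension but transverse to both hyperplanes, again a single orbit under the two-dimensional-quotient stabilizer $GL(2)$, which acts transitively on lines distinct from the two distinguished ones). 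In the symplectic case (C) one runs the same scheme inside the isotropic Grassmannian, replacing $GL$ by the stabilizer in $Sp(2n-6,\mathbb{R})$ (times the scaling $CSp$), using that the stabilizer of an isotropic flag still acts transitively on isotropic subspaces of a fixed intersection-type with that flag — the relevant Witt-type extension lemma for symplectic forms. The main obstacle I anticipate is the bookkeeping in this last case: one must check that the isotropy constraint does not obstruct transitivity, i.e. that for every admissible triple $(a,b,c)$ there actually exists an isotropic $W$ realizing it (so the constraint is on which triples occur, handled by the normal form, not on transitivity within a fixed triple), and that the swap component $\bar s$-type element is available inside the isotropic-flag stabilizer; both follow from the explicit matrix descriptions above, but making the dimension count uniformly in $l$ and $n$ is the fiddly part.
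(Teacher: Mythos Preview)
The paper does not actually supply a proof of this Proposition: it is stated immediately after the explicit matrix descriptions of the two components of the automorphism group and is left to the reader, with the next section turning directly to the proof of Theorem~\ref{main}. So there is no argument in the paper to compare your proposal against.

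Your outline is a reasonable way to fill the gap the authors leave. The reduction to the $L/Q-\{W_1,W_2\}$ factor is exactly what the displayed matrix form of the automorphism group encodes (the $\exp(\fn)$ and $L'$ factors act transitively on their respective pieces and do nothing on the Grassmannian factor), and your reading of $\dim(W\cap(W_1\sqcup W_2))$ as $\max(\dim(W\cap W_1),\dim(W\cap W_2))$, together with the dichotomy $c(W)\in\{b(W)-1,b(W)\}$, is the right way to unpack the third invariant. Necessity is then clear; for sufficiency your case split is the natural one, and in case (A) the transitivity reduces to the standard fact that $GL$ acts transitively on subspaces with prescribed intersection dimensions against a fixed two--step flag, while the swapping component handles the $W_1$/$W_2$ ambiguity when $c=b$. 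In case (C) the Witt extension argument is indeed what is needed.

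Two points to tighten. First, in the $c=b-1$ case you should record that the data $(a,b)$ really can vary independently (one can have $a=b-1$ or $a<b-1$ depending on whether $V\cap W_1$ falls inside $W_1\cap W_2$), so your transitivity argument there must handle a genuine two--parameter family, not just the line in $(W_1+W_2)/(W_1\cap W_2)$; concretely you need transitivity of the identity component on subspaces $V\subset W_1+W_2$ with prescribed $\dim(V\cap W_1\cap W_2)$ and $\dim V$, transverse to both $W_i$, and then separately extend $V$ to $W$ in the complementary directions. Second, in the symplectic case be explicit that the isotropy of $W$ forces no extra relations among $(a,b,c)$ beyond those already present in case (A); this is true because $W_1+W_2$ is isotropic (both $e_n$ and $f_n$ lie outside $W_1\cap W_2$ but $W_1,W_2$ are each Lagrangian and their sum has its symplectic complement equal to $W_1\cap W_2$), so the intersection constraints live in an isotropic ambient and Witt extension applies cleanly.
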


\section{The proof of Theorem \ref{main}}

In order to proof the Theorem \ref{main}, it suffices to prove the Corollary \ref{main2}. So we will assume that $(L,Q)$ is one of the types of parabolic geometries from the above series (A) or (C).
Since $L$ acts by automorphisms of the flat model $(L\to L/Q,\om_L)$ from the left, the restriction of $(L\to L/Q,\om_L)$ to $L/Q-\{l_1Q, l_2Q\}$ is isomorphic to restriction of $(L\to L/Q,\om_L)$ to $L/Q-\{ll_1Q, ll_2Q\}$ for all $l\in L$. 

Therefore we can choose $l=ql_1^{-1}$ for some $q\in Q$ and work with the parabolic geometry on $L/Q-\{eQ,q l_1^{-1}l_2Q\}$. Therefore the non--isomorphic restrictions of $(L\to L/Q,\om_L)$ to $L/Q-\{l_1Q, l_2Q\}$ are parametrized by the double coset space $Q\backslash L/Q$.
We will find a suitable representative $v\in L$ of the classes in $Q\backslash L/Q$ and investigate the symmetries on the restrictions of $(L\to L/Q,\om_L)$ to $L/Q -\{eQ, vQ\}$.

The elements of the Lie algebra $\fl$ of $L$, which are diagonal in the bases $e_1,\dots,e_{n+1}$ or $e_1,\dots,f_{n}$, respectively, form the Cartan subalgebra of the Lie algebra $\fl$. The Lie algebra $\fq$ of $Q$ is a standard parabolic subalgebra of $\fl$ for this Cartan subalgebra and we denote by $\fl_{-1}\oplus \fl_0\oplus \fl_1$ the corresponding $|1|$--grading of $\fl$. Then the subgroups $W(\fl)$, $W(\fl_0)$ generated by elements of $L$, $L_0$, which permute the elements of the bases $e_1,\dots,e_{n+1}$ or $e_1,\dots,f_{n}$, respectively, induce the Weyl groups of $\fl$, $\fl_0$. Let us recall that there are representatives of the classes of $W(\fl)/W(\fl_0)$ encoded by the Hasse diagram $\mathcal{W}^{\fq}$ of the parabolic subalgebra $\fq$, which define the decomposition of $L/Q$ into Schubert cells, see \cite[Section 3.2.19]{parabook}.

\begin{lemma*}
Each element of $L/Q$ can be uniquely written as $\exp(Z)wQ$ for $w\in \mathcal{W}^{\fq}$ and $Z\in \Ad_w^{-1}(\fl_{-1})\cap \fb_+$, where $\fb_+\subset \fq$ is the sum of all positive root spaces in $\fl$. The dimension of $\Ad_w^{-1}(\fl_{-1})\cap \fb_+$ is equal to the length of $w$ in $\mathcal{W}^{\fq}$.
\end{lemma*}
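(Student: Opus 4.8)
The assertion is the Bruhat (Schubert--cell) decomposition of $L/Q$ spelled out in the $|1|$--graded coordinates, and the plan is to deduce it from the Bruhat decomposition of $L$. Write $B=T\exp(\fb_+)$ for the Borel subgroup with Lie algebra $\fh\oplus\fb_+$, where $\fh$ is the fixed Cartan subalgebra; since $\fb_+\subset\fq$ one has $B\subset Q$. For $w\in\mathcal{W}^{\fq}$ pick a lift to $N_L(T)$; as $T\subset Q$, the coset $wQ\in L/Q$ does not depend on the choice. First I would recall from \cite[Section 3.2.19]{parabook} that the Bruhat decomposition $L=\bigsqcup_{\sigma}B\sigma B$ descends, for the parabolic $Q$, to a disjoint union $L/Q=\bigsqcup_{w\in\mathcal{W}^{\fq}}B\,wQ/Q$ of Schubert cells, each of which is a single $B$--orbit. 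Because $T$ normalises $\exp(\fb_+)$ and $T\subset Q$ fixes $wQ$, the cell through $wQ$ equals the orbit of $wQ$ under $\exp(\fb_+)$, hence is a homogeneous space of a unipotent group.

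The core of the argument is to identify this orbit with $\{\exp(Z)wQ:Z\in\fn_w\}$, where $\fn_w:=\Ad_w^{-1}(\fl_{-1})\cap\fb_+$ is the subspace named in the statement. The isotropy subgroup $\exp(\fb_+)\cap wQw^{-1}$ has Lie algebra $\fb_+\cap\Ad_w(\fq)$, and since $\fb_+$, $\Ad_w(\fq)$ and $\fn_w$ are all sums of $\fh$--root spaces, a comparison of root spaces --- using the defining property of $\mathcal{W}^{\fq}$ that $w$ carries the positive roots of $\fl_0$ to positive roots of $\fl$ --- yields the direct sum $\fb_+=\fn_w\oplus(\fb_+\cap\Ad_w(\fq))$ and, at the same time, the equality $\dim\fn_w=\ell(w)$: the positive roots landing in $\fl_{-1}$ under the relevant $\Ad$--twist are exactly the inversions of $w$, the property of $\mathcal{W}^{\fq}$ ruling out any contribution from the roots of $\fl_0$. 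Moreover $\fn_w$, being cut out of the abelian $\Ad_w^{-1}(\fl_{-1})$, is an abelian subalgebra, so $U_w:=\exp(\fn_w)$ is a closed subgroup on which $\exp$ is a diffeomorphism; combined with the structure theory of \cite[Section 3.2.19]{parabook} this gives the factorisation $\exp(\fb_+)=U_w\cdot(\exp(\fb_+)\cap wQw^{-1})$, so the orbit map $Z\mapsto\exp(Z)wQ$ carries $\fn_w$ onto the cell $B\,wQ/Q$. This settles existence of the asserted expression.

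For uniqueness within a fixed cell, if $\exp(Z_1)wQ=\exp(Z_2)wQ$ with $Z_1,Z_2\in\fn_w$, then $\exp(Z_2)^{-1}\exp(Z_1)\in U_w\cap wQw^{-1}$, a closed subgroup of the simply connected torsion--free unipotent group $U_w$ whose Lie algebra $\fn_w\cap\Ad_w(\fq)$ is zero, hence the trivial group; so $Z_1=Z_2$, and the disjointness of the cells gives uniqueness of $w$. The step I expect to be the main obstacle is the root--space bookkeeping of the second paragraph --- the splitting $\fb_+=\fn_w\oplus(\fb_+\cap\Ad_w(\fq))$ with $\fn_w$ a subalgebra, and the length count --- together with keeping the normalisations of $\mathcal{W}^{\fq}$ and of the representatives $w$ aligned with the precise shape $\Ad_w^{-1}(\fl_{-1})\cap\fb_+$ of the space that occurs. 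Everything else is either the standard content of \cite[Section 3.2.19]{parabook} or an elementary fact about unipotent groups, and no difficulty from the real form enters because $L$ is split over $\R$.
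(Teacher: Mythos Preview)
The paper does not give its own proof of this lemma: it is stated as a recollection of the Schubert--cell decomposition, with the reference to \cite[Section 3.2.19]{parabook} standing in for an argument. Your outline is the standard one and is correct in strategy.

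There is, however, a concrete mismatch in the root--space bookkeeping you yourself flagged as the delicate step. The isotropy of $wQ$ in $\exp(\fb_+)$ has Lie algebra $\fb_+\cap\Ad_w(\fq)$, as you say; but then the \emph{natural} root--space complement in $\fb_+$ is $\fb_+\cap\Ad_w(\fl_{-1})$, obtained by intersecting the decomposition $\Ad_w(\fl)=\Ad_w(\fl_{-1})\oplus\Ad_w(\fq)$ with $\fb_+$ --- not $\fn_w=\Ad_w^{-1}(\fl_{-1})\cap\fb_+$. These two subspaces differ by $w\leftrightarrow w^{-1}$, and the property you invoke (``$w$ carries the positive roots of $\fl_0$ to positive roots'') is the characterisation of minimal representatives of $W(\fl)/W(\fl_0)$; with that convention the splitting you wrote fails already for $w=s_2s_1$ in $SL(3,\R)/P_1$, where $\fn_w$ and $\fb_+\cap\Ad_w(\fq)$ both equal the single root space $\fl_{\alpha_1}$, so neither the direct sum nor the count $\dim\fn_w=\ell(w)$ holds. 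The repair is simply to keep the conventions aligned: either replace $\fn_w$ by $\fb_+\cap\Ad_w(\fl_{-1})$ (and then the length count follows because every inversion of $w^{-1}$ lands in $\fl_{-1}$ when $w\Phi^+(\fl_0)\subset\Phi^+$), or keep the paper's $\Ad_w^{-1}(\fl_{-1})\cap\fb_+$ but use the characterisation $w^{-1}\Phi^+(\fl_0)\subset\Phi^+$. With either fix the remainder of your argument --- abelianness of the complement, the unipotent factorisation, and the uniqueness step --- goes through unchanged.
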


Consequently, the double coset space $Q\backslash L/Q$ is finite and is in bijective correspondence with the double coset space $W(\fl_0)\backslash W(\fl)/W(\fl_0)$. Therefore we can represent the classes of $Q\backslash L/Q$ by the shortest elements in the Haase diagram $\mathcal{W}^\fq$ from the class in $W(\fl_0)\backslash W(\fl)/W(\fl_0)$.

We start the investigation of the symmetries of the restriction of $(L\to L/Q,\om_L)$ to $L/Q -\{eQ, vQ\}$ on the smallest cell $\exp(Z)wQ$ for $w\in \mathcal{W}^{\fq}$ of the length $1$. In the case $(A)$, there is unique $w$ of length $1$, which is the simple reflection over the $(l-3)^{rd}$ simple root that corresponds to swapping $e_{l}$ and $e_{l+1}$, and $Z$ is contained in the root space of the $(l-3)^{rd}$ simple root of $\fl$. In the case $(C)$, there is unique $w$ of length $1$, which is the simple reflection over the $(n-3)^{rd}$ simple root that corresponds to swapping $e_{n}$ and $f_{n}$, and $Z$ is contained in the root space of the $(n-3)^{rd}$ simple root of $\fl$.

\begin{lemma*} \label{l1}
Let $(L,Q)$ be the type of parabolic geometry from (A) or (C), let $w$ be the unique element of $\mathcal{W}^{\fq}$ 
of the length $1$ and let $v$ be the shortest element in $\mathcal{W}^{\fq}$ representing a class in $Q\backslash L/Q$. If the length of $v$ is greater than $1$, then there is no symmetry at the points $\exp(Z)wQ, Z\neq 0$ of $L/Q -\{eQ, vQ\}$.
\end{lemma*}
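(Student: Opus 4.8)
The strategy is to use the general theory of symmetries at a point $x$ of a parabolic geometry, as developed in \cite{GZ-Lie}: a symmetry at $\exp(Z)wQ$ must lie over an element of $L_0$ acting as $-\id$ on $\fl_{-1}$, and it is essentially determined by an element $s\exp(Y)$ for $Y\in\fl_1$ modulo the curvature constraints. Here, however, the geometry is flat (it is the restriction of the flat model $(L\to L/Q,\om_L)$ to $L/Q-\{eQ,vQ\}$), so a symmetry at a point is nothing but an element of $L$ that fixes that point, acts as $-\id$ on the distribution there, and preserves the set $\{eQ,vQ\}$. By the Lemma above, every such symmetry is conjugate (by the left action of $L$) to one of the left multiplications by $s\exp(-\Ad_s(Y))\exp(Y)$ described in the introductory Lemma, now required to preserve $\{eQ,vQ\}$. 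So the proof reduces to a concrete question about $L$ acting on $L/Q$: does there exist an element $\varphi$ of this explicit form, based at $\exp(Z)wQ$ with $Z\neq 0$, that permutes $\{eQ,vQ\}$?

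\textbf{Key steps.} First I would translate ``symmetry at $\exp(Z)wQ$'' into: an element $\psi\in L$ with $\psi(\exp(Z)wQ)=\exp(Z)wQ$, whose action on $T_{\exp(Z)wQ}(L/Q)$ is $-\id$ on $\fl_{-1}$, and with $\psi\{eQ,vQ\}=\{eQ,vQ\}$. Conjugating by $(\exp(Z)w)^{-1}$ we may assume the base point is $eQ$, so $\psi\in Q$ and, by the introductory Lemma (applied to $(L,Q)$, using that $w\bar s w^{-1}$ supplies the needed element of $L_0$ acting as $-\id$ on $\fl_{-1}$), $\psi$ has the form $s_0\exp(Y)$ with $s_0\in L_0$ the unique element acting as $-\id$ on $\fl_{-1}$ and $Y\in\fl_1$. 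The condition $\psi\{eQ,vQ\}=\{eQ,vQ\}$ becomes: either $\psi$ fixes both of the (conjugated) marked points, or it swaps them. Since one marked point is $eQ$ itself (after conjugation it is $(\exp(Z)w)^{-1}eQ=w^{-1}\exp(-Z)Q$) and $\psi\in Q$ already fixes the point it is based at, the real content is a linear-algebra condition relating $Z$, $w$, and the position of $vQ$.

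\textbf{Carrying out the length-$1$ case.} Now specialize: $w$ is the single simple reflection ($(l-3)^{rd}$ in case (A), $(n-3)^{rd}$ in case (C)), $Z$ lies in the corresponding simple root space, and $v$ has length $\geq 2$. The smallest cell $\{\exp(Z)wQ:Z\neq0\}$ is one-dimensional; I would compute explicitly, in the matrix models of series (A) and (C), the subspace $W$ of $\R^{n+1}$ (resp.\ the isotropic subspace of $\R^{2n}$) corresponding to $\exp(Z)wQ$, and likewise the subspace $W_2$ corresponding to $vQ$. By the Corollary on geometric interpretation, a preserving symmetry exists iff $W\cap(W_1+W_2)\subseteq W_1$ or $\subseteq W_2$ (where $W_1$ is the subspace for $eQ$), and one checks a swapping symmetry would force $W_1,W_2$ to be in special relative position. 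The point is that $v$ of length $\geq 2$ (as the shortest representative of its double coset) forces $W_2$ to differ from $W_1$ in at least two basis directions, hence $\dim(W_1\cap W_2)\leq\dim W_1-2$; combining this with the explicit one-parameter family $W=W(Z)$ from the cell, the intersection conditions for both a preserving and a swapping symmetry fail for every $Z\neq0$. I would organize this as: (i) write $W_1,W_2,W(Z)$ as explicit spans; (ii) compute $W(Z)\cap(W_1+W_2)$ and $W(Z)\cap(W_1\cap W_2)$; (iii) observe the preserving condition from the Corollary is violated because the nonzero entry of $Z$ produces a vector of $W(Z)$ inside $W_1+W_2$ but in neither $W_1$ nor $W_2$; (iv) rule out swapping symmetries by the algebraic-set intersection invariant $\dim(W\cap(W_2\sqcup W_1))$ from the final Proposition, which a swap would have to equalize but cannot because $v$ has length $\geq2$.

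\textbf{Main obstacle.} The delicate point is step (iii)--(iv): making precise, uniformly in $n$ and $l$, that length of $v$ at least $2$ really does obstruct \emph{both} types of symmetry at every nonzero $Z$ in this cell. The preserving case is a clean intersection-dimension computation, but the swapping case requires knowing that no element of $Q$ (equivalently, of the Levi $L_0$ times $\exp(\fl_1)$) based at $\exp(Z)wQ$ can interchange $eQ$ and $vQ$ — one must genuinely use that $v$ is the \emph{shortest} representative, so that $vQ$ is not ``close'' to $eQ$ in the Bruhat order, together with the fact that a swapping symmetry would have to be an isometry-type involution exchanging the two points through the base point $\exp(Z)wQ$, forcing $\exp(Z)wQ$ to lie ``between'' them in a way incompatible with $Z$ lying in a single simple root space. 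I expect this to be handled by a direct contradiction in the matrix model rather than by abstract Weyl-combinatorics, and it is the step where the case distinction (A) vs.\ (C) actually matters for the bookkeeping, though the mechanism is the same.
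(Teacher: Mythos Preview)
Your setup is right: a symmetry at $\exp(Z)wQ$ in the restricted flat model is an element of $L$ of the form $\exp(Z)w\,s\exp(Y)\,(\exp(Z)w)^{-1}$ with $Y\in\fl_1$, and the content of the lemma is whether such an element can either fix both $eQ,vQ$ or swap them. Where your plan goes wrong is in how you propose to decide this.

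\medskip
\textbf{The circularity/misapplication problem.} For the preserving case you invoke ``the Corollary on geometric interpretation'' (the criterion $W\cap(W_1+W_2)\subset W_1$ or $W_2$), and for the swapping case you invoke the orbit invariants of the final Proposition. Both of those statements are formulated \emph{only} for the situation in which condition (4) of the Proposition is satisfied, i.e.\ precisely when $v$ has length $1$; the geometric Corollary even says explicitly that it interprets the condition from Lemma~\ref{l5}, which is proved after Lemma~\ref{l1}. You are trying to prove the length $\geq 2$ case by quoting criteria that (i) are only asserted for the length $1$ case and (ii) logically sit downstream of the lemma you are proving. So the argument as written is both circular and out of scope: the intersection criterion $W\cap(W_1+W_2)\subset W_i$ has no established meaning when $\dim(W_1\cap W_2)\leq \dim W_1-2$, and the orbit invariants of the Proposition describe the automorphism group of the geometry you only get \emph{after} the length $1$ case has been singled out.

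\medskip
\textbf{The swapping case is not actually argued.} Even setting circularity aside, step (iv) is a hope rather than an argument: a swap interchanges $W_1$ and $W_2$, so any invariant symmetric in $W_1,W_2$ is automatically preserved, and you have not said what quantity a putative swap would fail to equalize. The paper handles this point by a concrete computation that you bypass: one writes the swapping condition as
\[
\exp\bigl(\Ad_w(Y)+[Z,\Ad_w(Y)]+\tfrac12[Z,[Z,\Ad_w(Y)]]\bigr)\,v\in Q,
\]
observes that right multiplication by $v\in W(\fl)$ permutes columns, and then reads off the diagonal of the exponential: all diagonal entries are $1$ except at the $(l{-}3)^{\rm rd}$ and $(l{-}2)^{\rm nd}$ (resp.\ $(n{-}3)^{\rm rd}$ and $2(n{-}3)^{\rm rd}$) positions, which depend on $[Z,\Ad_w(Y)]$. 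Hence if $v$ moves any column other than those two---which is exactly what length $\geq 2$ forces---the product cannot land in $Q$. That diagonal-entry obstruction is the missing idea in your plan. Similarly, for the preserving case the paper reduces to the two conditions $\Ad_{\exp(Z)w}(Y)\in\fq$ and $\exp(\Ad_v^{-1}\Ad_{\exp(-Z)w}(Y))\exp(-2\Ad_v^{-1}Z)\in Q$, and uses that for $v$ of length $\geq 2$ one has $\Ad_v^{-1}(Z)\in\fl_{-1}\setminus\{0\}$ while the first condition forces the relevant simple-root component of $\Ad_{\exp(-Z)w}(Y)$ to vanish; this root-space bookkeeping, not a subspace-intersection count, is what closes the argument.
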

\begin{proof}
There is a symmetry at the point $\exp(Z)wQ$ preserving the points $eQ$ and $vQ$ if and only if there is $Y\in \fl_1$ such that $$\exp(Z)ws\exp(Y)(\exp(Z)w)^{-1}\in Q$$ and simultaneously $$v^{-1}\exp(Z)ws\exp(Y)(\exp(Z)w)^{-1}v\in Q$$ hold. Since $N_{L}(Q)=Q$,  $v^{-1}wsw^{-1}v\in Q$ and $\exp(\Ad_{wsw^{-1}}^{-1}(Z))=\exp(-Z)$ hold for both types (A) and (C), these two conditions are equivalent to the conditions $$\Ad_{\exp(Z)w}(Y)\in \fq$$ and simultaneously $$\exp(\Ad_v^{-1}\Ad_{\exp(-Z)w}(Y))\exp(-2\Ad_v^{-1}(Z))\in Q.$$ From the structure of $\mathcal{W}^{\fq}$ follows that $\Ad_v^{-1}(Z)$ is a non--zero element of $\fl_{-1}$, while the condition $\Ad_{\exp(Z)w}(Y)\in \fq$ implies that $\Ad_{\exp(-Z)w}(Y)$ has trivial component in the root space of $(l-3)^{rd}$ or $(n-3)^{rd}$ simple root, respectively. Therefore, there is symmetry at $\exp(Z)wQ$ preserving the points $eQ$ and $v Q$ only if $Z= 0$.

There is a symmetry at $\exp(Z)wQ$ swapping the points $eQ$ and $v Q$ if and only if there is $Y\in \fl_1$ such that the condition $$\exp(Z)ws\exp(Y)(\exp(Z)w)^{-1}v\in Q$$
holds. This condition is equivalent to the condition $$\exp(\Ad_{\exp(Z)w}(Y))v=\exp(\Ad_w(Y)+[Z,\Ad_w(Y)]+1/2[Z,[Z,\Ad_w(Y)]])v\in Q.$$
Since the right multiplication by elements of $W(\fl)$ acts by swapping columns in the matrix $\exp(\Ad_w(Y)+[Z,\Ad_w(Y)]+1/2[Z,[Z,\Ad_w(Y)]])$, 
the decisive for the existence of the symmetry are the entries on the diagonal of $\exp(\Ad_w(Y)+[Z,\Ad_w(Y)]+1/2[Z,[Z,\Ad_w(Y)]])$. But there are numbers $1$ on the diagonal except the $(l-2)^{nd}$ and $(l-3)^{rd}$ position in the case (A) and $(n-3)^{nd}$ and $2(n-3)^{rd}$ position in the case (C) that both depend on $[Z,\Ad_w(Y)]$. Therefore, if the length of $v$ is greater than one, there is no swapping symmetry at $\exp(Z)wQ$. 
\end{proof}

Therefore it remains to investigate the symmetries of the restriction of the flat model to $L/Q -\{eQ, vQ\}$ for the unique element $v$ of $\mathcal{W}^{\fq}$ of the length $1$. In this case, we can again use the decomposition of $L/Q$ into Schubert cells to show, when there is a symmetry preserving the points $eQ$ and $vQ$.

\begin{lemma*}\label{l5}
Let $(L,Q)$ be the type of parabolic geometry from (A) or (C) and let $v$ be the unique element of $\mathcal{W}^{\fq}$ of the length $1$. Then there is a symmetry at point $\exp(Z)wQ$ of $L/Q -\{eQ, vQ\}$ preserving the points $eQ$ and $vQ$ if and only if $Z$ has trivial component in the root space of the $(l-3)^{rd}$ or $(n-3)^{rd}$ simple root of $\fl$, respectively.
\end{lemma*}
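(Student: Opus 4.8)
The plan is to follow the reduction used in the proof of Lemma~\ref{l1}. A symmetry at $x:=\exp(Z)wQ$ may be taken in the form $\phi=\exp(Z)w\,s\exp(Y)(\exp(Z)w)^{-1}$ with $Y\in\fl_1$; since $s$ acts as $-\id$ on $\fl_{-1}$ this is automatically a symmetry at $x$, so a \emph{preserving} symmetry is exactly one of this form with $\phi(eQ)=eQ$ and $\phi(vQ)=vQ$. Rewriting $\phi=\exp(Z)(wsw^{-1})\exp(\Ad_w(Y))\exp(-Z)$, where $wsw^{-1}$ is a diagonal matrix with entries $\pm1$ and hence lies in $L_0\subseteq Q$, and using $N_L(Q)=Q$ together with $\exp(\Ad_{wsw^{-1}}^{-1}(Z))=\exp(-Z)$ exactly as in the proof of Lemma~\ref{l1}, the two conditions reduce to the existence of $Y\in\fl_1$ with
$$\Ad_{\exp(Z)w}(Y)\in\fq\qquad\text{and}\qquad\exp\big(\Ad_v^{-1}\Ad_{\exp(-Z)w}(Y)\big)\,\exp\big(-2\Ad_v^{-1}(Z)\big)\in Q.$$
Since $Z\in\fb_+\subseteq\fq$ and the first condition also forces $\Ad_{\exp(-Z)w}(Y)\in\fq$ (conjugation by $\exp(\pm Z)\in Q$ preserves $\fq$), the element $q:=\exp(\Ad_{\exp(-Z)w}(Y))\exp(-2Z)$ automatically lies in $Q$, and the second condition says precisely that $q\in vQv^{-1}$, i.e.\ that $q$ also stabilises the subspace $W_2$ attached to $vQ=v(W_1)$.

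Let $\alpha$ denote the $(l-3)^{rd}$ (resp.\ $(n-3)^{rd}$) simple root of $\fl$; this is the crossed node of $\fq$, so that $v=s_\alpha$ is the reflection in $\alpha$ and $\Ad_v^{-1}=\Ad_{s_\alpha}$ carries $\fl_\alpha$ onto $\fl_{-\alpha}\subseteq\fl_{-1}$, while sending every other positive root space, and the whole Cartan subalgebra, back into $\fb_+$. Decompose $Z=Z_\alpha+Z''$, with $Z_\alpha$ the $\fl_\alpha$--component of $Z$ and $Z''$ having vanishing $\fl_\alpha$--component. For the ``if'' direction assume $Z_\alpha=0$. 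Taking $Y=0$ gives $\phi=\exp(Z)w\,s\,(\exp(Z)w)^{-1}$, for which the first condition is trivial and the second becomes $\exp(-2\Ad_v^{-1}(Z))\in Q$; but $Z=Z''$ has no $\fl_\alpha$--component, so $\Ad_v^{-1}(Z)\in\fb_+\subseteq\fq$ by the preceding sentence, and the condition holds. Hence $\phi$ is a preserving symmetry at $x$.

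The substantial part is the converse: if $Z_\alpha\neq0$ then no $Y$ works. I would argue in the block (Grassmannian, resp.\ isotropic--Grassmannian) realisation of $(L,Q)$, exactly as in the final paragraph of the proof of Lemma~\ref{l1}. The element $q$ lies in $vQv^{-1}$ iff $v^{-1}qv\in Q$, and by the structure of $s_\alpha$ the only way $v^{-1}qv$ can acquire a component in $\fl_{-1}$ is through $\fl_{-\alpha}$; there, the contribution of the factor $\exp(\Ad_{\exp(-Z)w}(Y))$ is governed by the $\fl_\alpha$--component of $\Ad_{\exp(-Z)w}(Y)$, and the constraint $\Ad_{\exp(Z)w}(Y)\in\fq$ forces that component to vanish --- this is the analogue, for general $w\in\mathcal{W}^\fq$ and $v$ of length one, of the statement in the proof of Lemma~\ref{l1} that $\Ad_{\exp(Z)w}(Y)\in\fq$ makes $\Ad_{\exp(-Z)w}(Y)$ have trivial component in the root space of the $(l-3)^{rd}$ (resp.\ $(n-3)^{rd}$) simple root. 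Consequently the $\fl_{-\alpha}$--component of $v^{-1}qv$ equals $-2\Ad_v^{-1}(Z_\alpha)\neq0$, so $q\notin vQv^{-1}$ and there is no preserving symmetry at $x$. Together with the previous paragraph this yields the equivalence. The main obstacle is precisely this last step: checking in the explicit matrix model that the single linear constraint $\Ad_{\exp(Z)w}(Y)\in\fq$ annihilates exactly the entry that could otherwise cancel the $-2Z_\alpha$ contribution, and that no other entry of $\Ad_{\exp(-Z)w}(Y)$ nor any higher Baker--Campbell--Hausdorff bracket in $\log(v^{-1}qv)$ can reach $\fl_{-\alpha}$ (or any other part of $\fl_{-1}$) --- a finite but delicate bookkeeping over the root spaces occurring in $\Ad_w(\fl_1)$ and their images under $\exp(\ad_Z)$, parallel to the computation already carried out for Lemma~\ref{l1}.
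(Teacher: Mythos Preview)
Your reduction to the two conditions and the ``if'' direction (take $Y=0$) match the paper exactly. The difference is in the converse. You correctly isolate the obstruction---the $\fl_{-\alpha}$--component of $v^{-1}qv$ must vanish, and this would require the $\fl_\alpha$--component of $\Ad_{\exp(-Z)w}(Y)$ to cancel $-2Z_\alpha$---but you leave the impossibility of such cancellation as an admitted ``main obstacle'' to be checked by root--space bookkeeping. That is the genuine gap in your write--up.

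The paper closes it not by matrix bookkeeping but by a short structural argument in the Hasse diagram. Assuming both conditions hold with $Z_\alpha\neq 0$, the second forces $\Ad_{\exp(-Z)w}(Y)$ to have non--trivial $\fl_\alpha$--component; since $\exp(\pm Z)\in Q$ the first condition gives $\Ad_w(Y)\in\fq$, and one reads off that $\Ad_w(Y)$ itself must have non--trivial $\fl_\alpha$--component, i.e.\ $\alpha$ lies in $w(\text{roots of }\fl_1)$. Now use that $Z_\alpha\neq 0$ with $Z\in\Ad_w^{-1}(\fl_{-1})\cap\fb_+$ forces $w=v\circ w'$ for some $w'\in\mathcal{W}^\fq$; but if $\alpha$ were also in the image $\Ad_w(\fl_1)$ then $\dim\big(\Ad_{w'}^{-1}(\fl_{-1})\cap\fb_+\big)=\dim\big(\Ad_{w}^{-1}(\fl_{-1})\cap\fb_+\big)+1$, contradicting the length interpretation of these dimensions in $\mathcal{W}^\fq$. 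This length/dimension contradiction replaces your proposed case--by--case computation and is what you should supply to complete the proof.
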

\begin{proof}
Since the conditions $v^{-1}wsw^{-1}v\in Q$ and $\exp(\Ad_{wsw^{-1}}^{-1}(Z))=\exp(-Z)$   from proof of the Lemma \ref{l1} are satisfied for generic $v$ and $w$ and $Z\in \Ad_w^{-1}(\fl_{-1})\cap \fb_+$, the symmetry $\exp(Z)ws(\exp(Z)w)^{-1}$ at the point $\exp(Z)wQ$ of $L/Q -\{eQ, vQ\}$ preserves the points $eQ$ and $vQ$ if and only if $Z$ has trivial component in the root space of the $(l-3)^{rd}$ or $(n-3)^{rd}$ simple root of $\fl$.

It remains to show that there are no preserving symmetries at the other points of $L/Q -\{eQ, vQ\}$. Therefore it suffices to show that if the symmetry 
$$\exp(Z)ws\exp(Y)(\exp(Z)w)^{-1}$$ 
at the point $\exp(Z)wQ$ preserves the points $eQ$ and $vQ$, then $Z$ has a trivial component in the root space of the $(l-3)^{rd}$ or $(n-3)^{rd}$ simple root of $\fl$, respectively.  Let us assume that the conditions $$\Ad_{\exp(Z)w}(Y)\in \fq$$ and simultaneously $$\exp(\Ad_v^{-1}\Ad_{\exp(-Z)w}(Y))\exp(-2\Ad_v^{-1}(Z))\in Q$$ hold. If $Z$ has a non--trivial component in the root space of the $(l-3)^{rd}$ or $(n-3)^{rd}$ simple root of $\fl$, then $\Ad_{\exp(-Z)w}(Y)$ has non--trivial component in the root space of the $(l-3)^{rd}$ or $(n-3)^{rd}$ simple root of $\fl$, too. But $$\Ad_{\exp(-Z)w}(Y)=\Ad_w(Y)-[Z,\Ad_w(Y)]+1/2[Z,[Z,\Ad_w(Y)]]\in \fq$$ follows from the condition $\Ad_{\exp(Z)w}(Y)\in \fq$, and thus $\Ad_w(Y)\in \fq$ has non--trivial component in the root space of the $(l-3)^{rd}$ or $(n-3)^{rd}$ simple root of $\fl$. This is contradiction with the condition $Z\in \Ad_w^{-1}(\fl_{-1})\cap \fb_+$ for $Z$ with a non--trivial component in the root space of the $(l-3)^{rd}$ or $(n-3)^{rd}$ simple root of $\fl$, because $w=v\circ w'$ for some $w'\in \mathcal{W}^\fq$ holds and if the $(l-3)^{rd}$ or $(n-3)^{rd}$ simple root of $\fl$ is in the image of $\Ad_w(\fl_1)$, then the dimension of $\Ad_{w'}^{-1}(\fl_{-1})\cap \fb_+$ is $dim(\Ad_w^{-1}(\fl_{-1})\cap \fb_+)+1$, which is contradiction.
\end{proof}

Therefore it remains to show that there is a symmetry swapping the points $eQ$ and $vQ$ at the points $\exp(Z)wQ\in L/Q$ such that $Z$ has a non--trivial component in the root space of the $(l-3)^{rd}$ or $(n-3)^{rd}$ simple root of $\fl$. We show this as a part of the following lemma that summarizes the previous statements.

\begin{lemma*}
Let $(L,Q)$ be the type of parabolic geometry from (A) or (C) and let $v$ be the unique element of $\mathcal{W}^{\fq}$ of the length $1$. Then there is a symmetry either preserving or swapping $eQ$ and $vQ$ at each $\exp(Z)wQ\in L/Q -\{eQ, vQ\}$. 
\end{lemma*}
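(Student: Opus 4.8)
The plan is to combine the three preceding lemmas into a single clean dichotomy. By the Lemma on Schubert cells, every point of $L/Q$ is $\exp(Z)wQ$ for a unique $w\in\mathcal W^{\fq}$ and $Z\in\Ad_w^{-1}(\fl_{-1})\cap\fb_+$; and by Lemma \ref{l1} the only removed pair $\{eQ,vQ\}$ for which symmetries can survive on any nontrivial cell is the one with $v$ of length $1$. So I fix this $v$ and go cell by cell through $w\in\mathcal W^{\fq}$. For $w=e$ (the base point $eQ$ itself is removed, so the cell minus $eQ$ is empty when $\dim=0$; otherwise the relevant statement is about other points of the open cell), and for $w$ of length $\geq 1$ I split $Z$ according to whether its component in the root space of the $(l-3)^{rd}$ (resp. $(n-3)^{rd}$) simple root $\alpha$ vanishes. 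If that component is zero, Lemma \ref{l5} produces the preserving symmetry $\exp(Z)ws(\exp(Z)w)^{-1}$. If that component is nonzero, I must exhibit a swapping symmetry, which is the new content.

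For the swapping case I would argue as follows. By Lemma \ref{l5} (and its proof), a nonzero $\alpha$-component of $Z$ forces $w=v\circ w'$ with $w'\in\mathcal W^{\fq}$, and moreover no preserving symmetry exists there, so it must be shown a swapping one does. A swapping symmetry at $\exp(Z)wQ$ is an element $\exp(Z)w\,s\exp(Y)(\exp(Z)w)^{-1}$ with $Y\in\fl_1$ satisfying the single condition $\exp\big(\Ad_w(Y)+[Z,\Ad_w(Y)]+\tfrac12[Z,[Z,\Ad_w(Y)]]\big)v\in Q$, exactly as written in the proof of Lemma \ref{l1}; equivalently, the matrix $\exp(\Ad_w(Y)+[Z,\Ad_w(Y)]+\tfrac12[Z,[Z,\Ad_w(Y)]])$, after the column swap induced by $v$ (which only interchanges columns $l-1,l$ in case (A), resp. columns $n-3,2(n-3)$ in case (C)), lies in $Q$. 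The point of the length-$1$ choice of $v$ is that now only \emph{two} diagonal entries are disturbed by the column swap, and these two entries are precisely the ones that the $\alpha$-component of $Z$ feeds into via the bracket $[Z,\Ad_w(Y)]$. So I would solve explicitly for $Y$: pick $Y$ to be (a multiple of) the $\alpha$-root vector in $\fl_1$, compute $\Ad_w(Y)$, and check that $[Z,\Ad_w(Y)]$ contributes exactly the off-diagonal/diagonal correction needed to push the column-swapped matrix into the lower-triangular-block form defining $Q$; the quadratic term $\tfrac12[Z,[Z,\Ad_w(Y)]]$ either vanishes (because $3\alpha$ is not a root of $\fl$ relative to the grading) or is absorbed. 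This is a finite $3\times 3$ (case (A)) or $2\times 2$-plus-symplectic-partner (case (C)) computation in the blocks indexed by $\{l-3,l-2,l-1,l\}$ (resp. $\{n-3, 2(n-3)\}$ and their symplectic mates).

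Assembling: for each $w$, either $Z$ has trivial $\alpha$-component and Lemma \ref{l5} gives a preserving symmetry at $\exp(Z)wQ$, or $Z$ has nontrivial $\alpha$-component and the explicit $Y$ above gives a swapping symmetry; in both cases the point $\exp(Z)wQ$ is distinct from $eQ$ and $vQ$ by the uniqueness in the Schubert-cell Lemma (the removed points are $w=e,Z=0$ and $w=v,Z=0$). Hence every point of $L/Q-\{eQ,vQ\}$ carries a symmetry, which is the claim.

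The main obstacle is the swapping-symmetry computation: one has to verify, in the explicit matrix realization of $L$ in series (A) and (C), that the single root vector $Y$ in $\fl_1$ attached to the length-$1$ simple root $\alpha$, together with the $\alpha$-component of $Z$, produces via Baker--Campbell--Hausdorff exactly the column/row correction that lands $\exp(\Ad_w(Y)+[Z,\Ad_w(Y)]+\tfrac12[Z,[Z,\Ad_w(Y)]])v$ in $Q$ — and that this works uniformly for \emph{every} cell $w$ (using $w=v\circ w'$ to control which coordinates of $Y$ and $Z$ interact). The two cases are parallel, with case (C) carrying the extra bookkeeping of the symplectic form $\Om$ forcing the starred entries; I expect the $\R P^1$ factor to play no role here and to be suppressed throughout, as in the statement of Theorem \ref{main}.
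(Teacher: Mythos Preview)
Your outline follows the paper's proof almost verbatim: split on whether the $\alpha$-component of $Z$ (with $\alpha$ the $(l-3)^{\mathrm{rd}}$ resp.\ $(n-3)^{\mathrm{rd}}$ simple root) vanishes, invoke Lemma~\ref{l5} for the preserving symmetry when it does, and otherwise construct a swapping symmetry by choosing $Y\in\fl_1$ so that $\exp(\Ad_w(Y)+[Z,\Ad_w(Y)]+\tfrac12[Z,[Z,\Ad_w(Y)]])v\in Q$. The one place your proposal deviates from the paper is the choice of $Y$: you take $Y$ to be (a multiple of) the $\alpha$-root vector itself, but then $\Ad_w(Y)$ lies in $\fl_{w(\alpha)}$, which for general $w$ is a root space in $\fl_{-1}$ different from $\fl_{-\alpha}$; after the column swap by $v=s_\alpha$ such an entry need not land in the block-upper-triangular part, so your verification would fail. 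The paper instead uses the duality between positive and negative roots to pick $Y\in\fl_1$ with $\Ad_w(Y)\in\fl_{-\alpha}$ (this exists precisely because $\alpha$ lies in $\Ad_w^{-1}(\fl_{-1})\cap\fb_+$), and then takes $Y$ ``anti-proportional'' to the $\alpha$-component of $Z$ so that the bracket $[Z_\alpha,\Ad_w(Y)]$ kills the relevant diagonal entry. With that correction your argument is the paper's argument. (Minor bookkeeping: the columns swapped by $v$ in case (A) are the $(l-3)^{\mathrm{rd}}$ and $(l-2)^{\mathrm{nd}}$, not $l-1,l$.)
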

\begin{proof}
Suppose $\exp(Z)wQ\in L/Q -\{eQ, vQ\}$ is such that $Z$ has non--trivial component in the root space of the $(l-3)^{rd}$ or $(n-3)^{rd}$ simple root of $\fl$. Since the conditions $v^{-1}wsw^{-1}v\in Q$ and $\exp(\Ad_{wsw^{-1}}^{-1}(Z))=\exp(-Z)$   from proof of the Lemma \ref{l1} are satisfied for generic $v$ and $w$ and $Z\in \Ad_w^{-1}(\fl_{-1})\cap \fb_+$, the symmetry $\exp(Z)ws\exp(Y)(\exp(Z)w)^{-1}$ at the point $\exp(Z)wQ$ of $L/Q -\{eQ, vQ\}$ swaps the points $eQ$ and $vQ$ if and only if $$\exp(\Ad_w(Y)+[Z,\Ad_w(Y)]+1/2[Z,[Z,\Ad_w(Y)]])v\in Q.$$ 

However, $v$ swaps $(l-2)^{nd}$ and $(l-3)^{rd}$ column in the case (A) and $(n-3)^{rd}$ and $2(n-3)^{rd}$ column in the case (C). Therefore there is a symmetry at $\exp(Z)wQ$ if there is $0$ on the $(l-3)^{rd}$ or $2(n-3)^{rd}$ position on diagonal in the matrix $\exp(\Ad_w(Y)+[Z,\Ad_w(Y)]+1/2[Z,[Z,\Ad_w(Y)]])$ and the component of $\Ad_w(Y)$ in $\fl_{-1}$ is contained in the root space of the minus $(l-3)^{rd}$ or $(n-3)^{rd}$ simple root of $\fl$. If  $Z\in \Ad_w^{-1}(\fl_{-1})\cap \fb_+$ has non--trivial component in the root space of the $(l-3)^{rd}$ or $(n-3)^{rd}$ simple root of $\fl$ and there is the duality between the positive and negative roots, there is $Y\in \fl_1$ such that $\Ad_w(Y)$ is contained in the root space of the minus $(l-3)^{rd}$ or $(n-3)^{rd}$ simple root of $\fl$. If $Y$ is anti--proportional to the component of $Z$  in the root space of the $(l-3)^{rd}$ or $(n-3)^{rd}$ simple root of $\fl$, then there is $0$ on the $(l-3)^{rd}$ or $2(n-3)^{rd}$ position on diagonal in the matrix $\exp(\Ad_w(Y)+[Z,\Ad_w(Y)]+1/2[Z,[Z,\Ad_w(Y)]])$ and the symmetry $\exp(Z)ws\exp(Y)(\exp(Z)w)^{-1}$ at the point $\exp(Z)wQ$ of $L/Q -\{eQ, vQ\}$ swaps the points $eQ$ and $vQ$.
\end{proof}

\end{document}